\numberwithin{equation}{section}
\newtheorem{Theorem}{Theorem}[section]
\newtheorem{Lemma}[Theorem]{Lemma}
\theoremstyle{definition}
\newtheorem*{Example}{Example}
\theoremstyle{remark}
\newtheorem*{remark}{Remark}
\newcommand{\ord}{\text{ord}}
\newcommand{\s}{s}
\newcommand{\zz}{\mathbb{Z}}
\newcommand{\qq}{\mathbb{Q}}
\newcommand{\B}{\text{\bf B}_2}
\newcommand{\Title}[1]{\begin{center}\Large{Title???}
\normalsize  \end{center}}
\title{Modular units from quotients of Rogers-Ramanujan type $q$-series}
\author[Hannah Larson]{Hannah Larson}
\address{5015 Donald St., Eugene, OR, 97405}
\email{hannahlarson@college.harvard.edu}
\begin{document}

\maketitle

\begin{abstract}
In \cite{F1} and \cite{F2}, Folsom presents a family of modular units as higher-level analogues of the Rogers-Ramanujan $q$-continued fraction. These units are constructed from analytic solutions to the higher-order $q$-recurrence equations of Selberg. Here, we consider another family of modular units,  which are quotients of Hall-Littlewood $q$-series that appear in the generalized Rogers-Ramanujan type identities of \cite{GOW}. In analogy with the results of Folsom, we provide a formula for the rank of the subgroup these units generate and show that their specializations at the cusp $0$ generate a subgroup of the cyclotomic unit group of the same rank. In addition, we prove that their singular values generate the same class fields as those of Folsom's units.
\end{abstract}

\section{Introduction}

The Rogers-Ramanujan $q$-continued fraction
\begin{equation}
r(\tau) := \dfrac{q^{1/5}}{1 + \dfrac{q}{1 + \dfrac{q^2}{1 + \dfrac{q^3}{\ddots}}}},
\end{equation}
where $q := e^{2\pi i \tau}$
has the surprising property that its values at imaginary quadratic irrational points $\tau$ in the upper-half of the complex plane, known as \textit{CM points}, are algebraic integer units \cite{BCZ}. For example, Ramanujan famously evaluated \cite{BR}
\[r(i) = \sqrt{\frac{5+\sqrt{5}}{2}} - \frac{\sqrt{5}+1}{2}.\]
This function arises from the quotient of the two $q$-series in the famous Rogers-Ramanujan identities,
\begin{equation} \label{RR1}
\sum_{n=0}^\infty\frac{q^{n^2}}{(1-q) \cdots (1 - q^n)} = \prod_{n=0}^\infty \frac{1}{(1 - q^{5n+1})(1 - q^{5n+4})}
\end{equation}
and
\begin{equation} \label{RR2}
\sum_{n=0}^\infty \frac{q^{n^2+n}}{(1 - q) \cdots (1 - q^n)} = \prod_{n=0}^\infty \frac{1}{(1 - q^{5n+2})(1 - q^{5n+3})}.
\end{equation}
One exhibits the continued fraction expansion from the left-hand sides by observing that the function
\begin{equation} \label{sel}
S(z) = S(z;q) := \sum_{n= 0}^\infty \frac{z^n q^{n^2}}{(1 - q) \cdots (1 - q^n)}
\end{equation}
satisfies the $q$-recurrence $S(z) = S(zq) + zqS(zq^2)$. Meanwhile, the right-hand sides of \eqref{RR1} and \eqref{RR2} give rise to an infinite product expansion
\[r(\tau) =q^{1/5}\prod_{n=0}^\infty \frac{(1 - q^{5n+1})(1-q^{5n+4})}{(1 - q^{5n+2})(1 - q^{5n+3})} ,\]
 which shows $r(\tau)$ to be a modular unit of level $5$.

Here, a \textit{modular unit} is a modular function with no zeros or poles in the upper half of the complex plane. For a given level $\ell$, such functions constitute the unit group $U_\ell$ of the integral closure of the ring $\qq[j] \subset \mathcal{F}_\ell$ where
\[j(\tau) = q^{-1} + 744 + 196884q + \ldots\]
is the classical modular invariant and $\mathcal{F}_\ell$ is the field of modular functions for $\Gamma(\ell)$ with Fourier expansion defined over $\qq(\zeta_\ell)$. In this paper, we will consider various subgroups of the unit group $U_\ell$, all of which will be denoted by $U_\ell^*$ with different superscripts $*$.

Let $\ell = 2k+1 \geq 5$ be an odd integer. In \cite{F1} and \cite{F2}, Folsom introduces a family of modular units $r_{\ell, m}(\tau)$ for $1 \leq m \leq k-1$ as higher-level analogues of the Rogers-Ramanujan $q$-continued fraction. The functions $r_{\ell, m}(\tau)$ are essentially quotients of Selberg functions $S_k(q)$, which are functional solutions to higher-order $q$-recurrences, generalizing \eqref{sel}.

Folsom proves that the $r_{\ell, m}(\tau)$ satisfy several properties similar to the Rogers-Ramanujan $q$-continued fraction, regarding the subgroup $U_\ell^C$ of the unit group they generate, their specializations at the cusp $0$, and the extensions generated by their values at CM points, known as \textit{singular values}. The aim of this paper is to prove analogous results for another family of modular units $s_{\ell,m}(\tau)$ that also arise naturally as generalizations of the Rogers-Ramanujan $q$-continued fraction.

In \cite{GOW}, the authors present a framework for Rogers-Ramanujan type identities and introduce infinite families of Rogers-Ramanujan type $q$-series generalizing \eqref{RR1} and \eqref{RR2}. 
These expressions involve the standard symbols
\[(a)_k = (a;q)_k := \begin{dcases} (1-a)(1-aq) \cdots (1-aq^{k-1}) & \text{if $k \geq 0$} \\
\prod_{j=0}^\infty (1-aq^j) & \text{if $k=\infty$}\end{dcases} \]
and
\[ \theta(a;q) := (a;q)_\infty(q/a;q)_\infty \qquad \text{with} \qquad \theta(a_1,\ldots, a_n; q) := \theta(a_1;q)\cdots \theta(a_n;q).\]
For positive integers $m$ and $n$, if $\ell = 2m + 2n + 1$, the authors show (see Theorem 1.1 in \cite{GOW}) that
\begin{equation} \label{gRR1}
\sum_{\lambda:\lambda_1 \leq m} q^{|\lambda|}P_{2\lambda}(1,q,q^2,\ldots;q^{2n-1}) = \frac{(q^\ell;q^\ell)^n_\infty}{(q)_\infty^n} \prod_{i=1}^n \theta(q^{i+m};q^\ell) \prod_{1\leq i < j \leq n} \theta(q^{j-1},q^{i+j-1};q^\ell)
\end{equation}
and
\begin{equation} \label{gRR2}
\sum_{\lambda:\lambda_1 \leq m} q^{2|\lambda|}P_{2\lambda}(1,q,q^2,\ldots;q^{2n-1}) =
\frac{(q^\ell;q^\ell)^n_\infty}{(q)_\infty^n} \prod_{i=1}^n \theta(q^{i};q^\ell) \prod_{1\leq i < j \leq n} \theta(q^{j-1},q^{i+j};q^\ell),
\end{equation}
where the sums range over partitions $\lambda$ and the summands include their associated Hall-Littlewood polynomials $P_{2\lambda}(x_1, x_2, \ldots;q)$. The authors then define appropriate normalizations of these series,
\begin{equation} \label{generalRR1}
\Phi_{1a}(m,n;\tau) := q^{\frac{mn(4mn-4m+2n-3)}{12\ell}} \sum_{\lambda:\lambda_1\leq m} q^{|\lambda|}P_{2\lambda}(1,q,q^2,\ldots;q^{2n-1})
\end{equation}   
and
\begin{equation} \label{generalRR2}
\Phi_{1b}(m,n;\tau) := q^{\frac{mn(4mn+2m+2n+3)}{12\ell}} \sum_{\lambda:\lambda_1 \leq m} q^{2|\lambda|}
P_{2\lambda}(1,q,q^2,\ldots;q^{2n-1}).
\end{equation}
Using the infinite product sides of \eqref{gRR1} and \eqref{gRR2}, the authors arrive at the following expression for the quotient of these two series in terms of Siegel functions $g_a(\tau)$ (defined in \eqref{Sprod}),
\begin{equation} \label{defPsi}
\Psi_1(m, n;\tau) := \frac{\Phi_{1a}(m, n;\tau)}{\Phi_{1b}(m, n;\tau)} = \prod_{j=1}^m \frac{g_{(\frac{2j}{\ell}, 0)}(\ell \tau)}{g_{(\frac{j}{\ell}, 0)}(\ell \tau)}.
\end{equation}

We first establish that these functions are indeed modular units.
\begin{Theorem} \label{pu}
For positive integers $m$ and $n$, the functions $\Psi_1(m, n;\tau)$ are modular units of level $\ell = 2m+2n+1$.
\end{Theorem}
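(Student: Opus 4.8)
The plan is to extract both required properties directly from the Siegel-function product \eqref{defPsi}, using the standard transformation theory of Siegel functions developed by Kubert and Lang. Throughout write $\ell = 2m+2n+1$ and recall from the product expansion \eqref{Sprod} that, for $0 < a_1 < 1$,
\[ g_{(a_1,0)}(\ell\tau) = -q^{\frac{\ell}{2}B_2(a_1)}\prod_{t\geq 0}\bigl(1-q^{\ell t + \ell a_1}\bigr)\bigl(1-q^{\ell t + \ell(1-a_1)}\bigr), \]
where $B_2(x)=x^2-x+\tfrac16$ and $q=e^{2\pi i\tau}$. Since $\ell = 2m+2n+1 > 2m$, for every $1\leq j\leq m$ both $\tfrac{j}{\ell}$ and $\tfrac{2j}{\ell}$ lie in $(0,1)$, so $\ell a_1 \in\{j,2j\}$ is a positive integer and each factor above carries a strictly positive $q$-exponent. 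There are two statements to establish: (i) $\Psi_1(m,n;\tau)$ has no zeros or poles on the upper half-plane; and (ii) $\Psi_1(m,n;\tau)$ is a modular function for $\Gamma(\ell)$ whose Fourier expansion is defined over $\qq(\zeta_\ell)$.

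Claim (i) is immediate from the displayed product. Each factor $1-q^{\ell t+r}$ with $r>0$ is holomorphic and nonvanishing on the upper half-plane because $|q^{\ell t + r}|<1$ there, and $\Psi_1$ is a finite quotient of finite products of such factors times a fixed power of $q$; hence it is holomorphic and nowhere zero on the upper half-plane, so all of its zeros and poles are supported at the cusps, as a modular unit requires.

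For (ii) I would verify $\Gamma(\ell)$-invariance by reducing the scaling $\tau\mapsto\ell\tau$ to an element of $SL_2(\zz)$. For $\gamma=\left(\begin{smallmatrix}a&b\\c&d\end{smallmatrix}\right)\in\Gamma(\ell)$ one has $\ell\,\gamma\tau=\gamma'(\ell\tau)$ with $\gamma'=\left(\begin{smallmatrix}a&\ell b\\ c/\ell&d\end{smallmatrix}\right)\in SL_2(\zz)$, using $\ell\mid c$. Applying the transformation law $g_{(a_1,0)}\circ\gamma' = \varepsilon(\gamma')\,g_{(a_1 a,\,a_1\ell b)}$, and noting that $a\equiv 1\pmod\ell$ together with $a_1\ell b = rb\in\zz$ forces every index $(a_1,0)$ back to itself modulo $\zz^2$, one finds $\Psi_1\circ\gamma = \rho(\gamma)\,\Psi_1$ for a root of unity $\rho(\gamma)$ arising from the $\eta$-multipliers $\varepsilon(\gamma')$ and the quasi-periodicity relations $g_{v+w}=\zeta\,g_v$ with $w\in\zz^2$. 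The crux is to show $\rho(\gamma)\equiv 1$. Because numerator and denominator contribute equally many factors, $\sum_a m(a)=0$, so the $\eta$-multiplier contributions (which depend only on $\gamma'$) cancel; and because every second coordinate is $0$, the only surviving Kubert--Lang quadratic quantity is
\[ \sum_a m(a)(\ell a_1)^2 = \sum_{j=1}^m (2j)^2 - \sum_{j=1}^m j^2 = 3\sum_{j=1}^m j^2 = \frac{m(m+1)(2m+1)}{2}. \]
Combined with the first-order Bernoulli terms, the resulting multiplier is trivial precisely because $\ell = 2m+2n+1$: for instance on the generator $T^\ell$ the multiplier is $e^{2\pi i c\ell}$, where $c = \tfrac{m(m+1)(2m+1)}{4\ell}-\tfrac{m(m+1)}{4}$ is the leading $q$-exponent of $\Psi_1$, and $c\ell = -\tfrac{n\,m(m+1)}{2}\in\zz$. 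The same cancellation for the lower-triangular generators is the main obstacle, as it requires the full root-of-unity bookkeeping in the Siegel transformation rather than a single quadratic congruence. Finally, the product shows $\Psi_1 = q^{c}\cdot(\text{integral power series in }q)$ with $c\in\tfrac1\ell\zz$, so its expansion lies in $\qq\subset\qq(\zeta_\ell)$, and with (i) this gives $\Psi_1\in U_\ell$.
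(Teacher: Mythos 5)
Your part (i) (no zeros or poles in $\mathbb{H}$) is fine, and your observation that the Fourier expansion is defined over $\qq$ is correct. But part (ii), which is the heart of the theorem, is not actually proved. You reduce $\Gamma(\ell)$-invariance to showing that a root of unity $\rho(\gamma)$ is trivial, verify this only for the translation $T^\ell$ (where the computation $c\ell = -\tfrac{nm(m+1)}{2}\in\zz$ is correct), and then explicitly defer the remaining generators: ``the same cancellation for the lower-triangular generators is the main obstacle, as it requires the full root-of-unity bookkeeping.'' That bookkeeping is precisely the content of the theorem, so the proof is incomplete exactly where it matters. Moreover, the quadratic quantity you compute, $\sum_a m(a)(\ell a_1)^2 = \tfrac{m(m+1)(2m+1)}{2}$, is \emph{not} divisible by $\ell$ in general (for $m=n=2$, $\ell=9$, it equals $15$), which is a warning sign that no Kubert--Lang-type congruence criterion can be applied directly to the indices $(j/\ell,0)$ with argument $\ell\tau$.

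The paper closes this gap with a step you are missing: a distribution relation. Using $\prod_{s=0}^{\ell-1}(x-ye(s/\ell))=x^\ell-y^\ell$, one shows
\[
g_{(r/\ell,0)}(\ell\tau)=e\!\left(-\tfrac{k(r-\ell)}{2\ell}\right)\prod_{s=0}^{\ell-1}g_{(r/\ell,\,s/\ell)}(\tau),
\]
so that $\Psi_1(m,n;\tau)$ becomes, up to an explicit root of unity, a product of Siegel functions with indices in $\tfrac{1}{\ell}\zz^2$ evaluated at $\tau$ itself. At that point Theorem \ref{puhammer} (already available in the paper) packages all the multiplier bookkeeping into three congruences mod $\ell$ plus one mod $12$, and these hold because the sum over the $\ell$ values of $s$ contributes the factor of $\ell$ that your direct computation lacks: e.g. $\sum_{j,s}\bigl((2j)^2-j^2\bigr)=\ell\cdot\tfrac{m(m+1)(2m+1)}{2}\equiv 0\pmod\ell$, the cross terms give $\tfrac{m(m+1)}{2}\cdot\tfrac{\ell(\ell-1)}{2}\equiv 0\pmod\ell$, and the multiplicities sum to $0$. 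If you want to salvage your conjugation-by-$\tau\mapsto\ell\tau$ route instead, you would need to carry out the full Klein-form multiplier computation (Kubert--Lang's formula for $\varepsilon_a(\gamma)$) for a generating set of $\Gamma(\ell)$, which is substantially more work than invoking the stated criterion after the distribution relation.
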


\begin{Example}
When $m=n=2$, applying \eqref{gRR1} shows that
\[
\Phi_{1a}(2,2;\tau) = q^{1/3}\sum_{\lambda:\lambda_1\leq 2} q^{|\lambda|}P_{2\lambda}(1,q,q^2,\ldots;q^3) = q^{1/3}\prod_{n=1}^\infty \frac{(1-q^{9n})}{(1-q^n)}, \]
which is the $q$-series in Dyson's favorite identity, recalled in his ``A walk through Ramanujan's garden" \cite{D}.
Meanwhile, using \eqref{gRR2}, the other $q$-series is
\[\Phi_{1b}(2,2;\tau) = q \sum_{\lambda:\lambda_1\leq 2} q^{2|\lambda|}P_{2\lambda}(1,q,q^2,\ldots;q^3) = q\prod_{n=1}^\infty \frac{(1-q^{9n})(1-q^{9n-1})(1-q^{9n-8})}{(1-q^n)(1-q^{9n-4})(1-q^{9n-5})}.
\]
Taking the quotient of these two series, we find that
\[\Psi_1(2,2;\tau) = q^{-2/3}\prod_{n=1}^\infty \frac{(1-q^{9n-1})(1-q^{9n-8})}{(1-q^{9n-4})(1-q^{9n-5})},\]
which is a modular function of level $9$.
\end{Example}

In analogy with Folsom's notation, we will now write
\[s_{\ell,m}(\tau) := \Psi_1\left(m, \frac{\ell-1}{2} - m; \tau\right).\]
In Theorem 1.7 (3) of \cite{GOW}, the authors show that for any CM point $\tau$, the value $s_{\ell,m}(\tau)$ is an algebraic integer unit. Although Folsom does not show the analogous result for the $r_{\ell,m}(\tau)$, it is not hard to prove from the theory she constructed.

\begin{Theorem} \label{au}
Let $\ell = 2k+1 \geq 5$ be an odd integer and let $2 \leq m \leq k$ with $\text{gcd}(\ell, m) = 1$. For any CM point $\tau$, we have that $r_{\ell, k+1-m}(\tau)$ is an algebraic integer unit. When $\ell$ is prime this implies that the singular values of any function in the subgroup $U_\ell^+ \subseteq U_\ell$ defined in Section \ref{ranksec} are algebraic integer units.
\end{Theorem}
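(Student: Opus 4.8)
The plan is to reduce both assertions to the behaviour of products of Siegel functions at CM points, following the strategy \cite{GOW} use for $s_{\ell,m}$. The first step is to make explicit the infinite product expansion of Folsom's $r_{\ell,m}$. Just as the right-hand sides of \eqref{RR1}--\eqref{RR2} turn $\Psi_1$ into the Siegel-function quotient \eqref{defPsi}, the Selberg-recurrence description of \cite{F1,F2} should express $r_{\ell,m}$ (up to its built-in leading $q$-power) as a quotient of Siegel functions $g_{(a/\ell,0)}(\ell\tau)$ whose indices $a$ are determined by $m \bmod \ell$. The hypothesis $\gcd(\ell,m)=1$ enters precisely here: it guarantees that the relevant indices are nonzero modulo $\ell$, so that the $g_{(a/\ell,0)}$ are honest non-degenerate Siegel functions and $r_{\ell,k+1-m}$ is a genuine modular unit of level $\ell$, in analogy with Theorem \ref{pu}.

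With this product in hand, the CM statement is standard. Each $g_{(a/\ell,0)}(\ell\tau)$ is holomorphic and non-vanishing on the upper half plane, and its expansion $\prod_{n\ge 0}(1-q^{\ell n + a})(1-q^{\ell n + \ell - a})$---together with the geometric-series expansion of the reciprocal factors---has coefficients in $\zz[\zeta_\ell]$ and leading coefficient a root of unity. Because $r_{\ell,k+1-m}$ is a modular unit, the fractional leading exponents combine to an integral order at every cusp; applying elements of $\mathrm{GL}_2(\zz/\ell)$ carries $r_{\ell,k+1-m}$ to further products of Siegel functions of the same shape, so both $r_{\ell,k+1-m}$ and its inverse have algebraic integer $q$-expansions at all cusps. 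The symmetric functions of these conjugates then furnish a monic polynomial over $\zz[j]$, showing that $r_{\ell,k+1-m}$ and $r_{\ell,k+1-m}^{-1}$ are integral over $\zz[j]$. Since $j(\tau)$ is an algebraic integer at every CM point $\tau$, it follows that $r_{\ell,k+1-m}(\tau)$ and its inverse are algebraic integers, i.e.\ $r_{\ell,k+1-m}(\tau)$ is an algebraic integer unit. Alternatively, one may cite directly the result of \cite{GOW} (ultimately Kubert--Lang) used for $s_{\ell,m}$, now applied to the product expansion of $r_{\ell,k+1-m}$.

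For the final assertion, suppose $\ell$ is prime. Then every integer $m$ with $2 \le m \le k$ automatically satisfies $\gcd(\ell,m)=1$, so the first part applies to each generator $r_{\ell,k+1-m}$ of the subgroup $U_\ell^+$ from Section \ref{ranksec}. Any element of $U_\ell^+$ is a finite product of integer powers of these generators, and the algebraic integer units of $\overline{\qq}$ form a group under multiplication; hence evaluating at a CM point $\tau$ gives a product of algebraic integer units, which is again an algebraic integer unit.

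The hard part will be the first step: extracting a clean Siegel-function product for $r_{\ell,k+1-m}$ from Folsom's Selberg-function quotient and checking that the indices land correctly, so that $r_{\ell,k+1-m}$ and its reciprocal remain integral at every cusp. Once that expansion is secured the passage to CM points is essentially a citation, and the entire arithmetic content sits in the product formula.
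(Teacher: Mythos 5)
Your proposal follows essentially the same route as the paper: the paper's Lemma (built on Folsom's Theorem 1.ii) gives the clean product $r_{\ell,k+1-m}(\tau) = (-1)^{m-1}\,g_{(m/\ell,0)}(\ell\tau)/g_{(1/\ell,0)}(\ell\tau)$ that you identify as ``the hard part,'' and then Kubert--Lang (Theorem \ref{auhammer}, your ``alternative'' citation, applied with $a=(1/\ell,0)$ and $c=m$ --- which is where $\gcd(\ell,m)=1$ is actually used) immediately yields the unit statement at CM points, while the second assertion is Folsom's Theorem 1.iii plus multiplicativity, exactly as you argue. One caution on your from-scratch integrality sketch: the leading Fourier coefficients of individual Siegel functions at cusps can be of the form $1-\zeta_\ell^j$ (non-units), so integrality of the reciprocal is not automatic from non-degeneracy of the indices; it is the quotient structure $g_{ca}/g_a$ with $\gcd(c,\ell)=1$ that makes those leading coefficients cyclotomic units, which is why the coprimality hypothesis belongs to the Kubert--Lang unit theorem rather than merely to the indices being nonzero mod $\ell$.
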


In Theorem 1.iii of \cite{F1}, Folsom studies the subgroup generated by the $r_{\ell,m}(\tau)$,
\[U_\ell^C := \langle r_{\ell,m}(\tau) : 1 \leq m \leq k-1\rangle,\]
and proves that $\mathrm{Rank}(U^C_{\ell}) = k-1$. It then follows that, together with the constant functions, $U_\ell^C$ generates the subgroup $U_\ell^+$ of modular units whose zeros and poles are supported on certain cusps (see Section \ref{ranksec}). Similarly, we define $U_\ell^R$ to be the subgroup of the unit group generated by the $s_{\ell,m}(\tau)$,
\[U_\ell^R := \langle s_{\ell,m}(\tau) : 1 \leq m \leq k-1 \rangle.\]
It is natural to ask if the same is true of $U_\ell^R$. We will show that this is the case under certain conditions.

\begin{Theorem} \label{rank}
Let $\ell = 2k+1 \geq 5$ be prime and let $d$ be the order of $2$ in $(\zz/\ell \zz)^\times/\{\pm 1\}$. Then $U_\ell^R \subseteq U_\ell^+$ and has rank
\[\mathrm{Rank}(U_\ell^R)= k - \frac{k}{d}.\]
In particular, whenever $2$ generates $(\zz/\ell \zz)^\times/\{\pm 1\}$, we have $U_\ell^R \times \qq(\zeta_\ell)^\times=U_\ell^+$.
\end{Theorem}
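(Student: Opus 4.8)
The plan is to reduce the rank computation to a linear-algebra problem on the free abelian group indexed by $G := (\zz/\ell\zz)^\times/\{\pm 1\}$, governed by the multiplication-by-$2$ permutation. Writing $h_a(\tau) := g_{(a/\ell,0)}(\ell\tau)$ and using $g_{-a} = -g_a$ for Siegel functions, the divisor $D_a := \mathrm{div}(h_a)$ depends only on the class $\bar a \in G$, and $\eqref{defPsi}$ gives $\mathrm{div}(s_{\ell,m}) = \sum_{j=1}^m (D_{\overline{2j}} - D_{\bar j})$. First I would make the standard reduction $\mathrm{Rank}(U_\ell^R) = \mathrm{Rank}\,\mathrm{div}(U_\ell^R)$: since a product of Siegel functions is determined up to a root of unity by its divisor, the kernel of $\mathrm{div}$ on $U_\ell^R$ is torsion, so the rank is unchanged.

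Next I would compute the orders of vanishing of the $h_a$ at the cusps using $\mathrm{ord}\, g_a = \tfrac12 B_2(\langle a_1\rangle)$. This serves two purposes: it pins down the support of $\mathrm{div}(s_{\ell,m})$, establishing $U_\ell^R \subseteq U_\ell^+$, and it realizes $\mathrm{div}$ as a homomorphism $\rho$ from $L := \zz[G]$ to the cuspidal divisor group with $[\bar a] \mapsto D_a$. In this language $\mathrm{div}(s_{\ell,m}) = \rho\big((\mu_2 - I)\,w_m\big)$, where $\mu_2$ is the permutation $\bar a \mapsto \overline{2a}$ of $G$ and $w_m = [\bar 1] + \cdots + [\bar m]$. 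Since $\{w_1,\dots,w_{k-1}\}$ spans the coordinate subspace $V' = \langle [\bar 1],\dots,[\overline{k-1}]\rangle$ (a hyperplane in $L\otimes\qq$ missing $[\bar k]$), the rank equals $\dim \rho\big((\mu_2 - I)(V')\big)$.

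The core computation is the rank of $(\mu_2 - I)$ on $V'$. Because $\ell$ is prime, $\mu_2$ is a permutation of $G$ all of whose orbits have size $d$, so it has $k/d$ orbits and $\dim\ker(\mu_2 - I) = k/d$, spanned by the orbit sums $\sigma_O := \sum_{x \in O}[x]$. The one subtlety is the interaction with the hyperplane $V'$: since $2\cdot\tfrac{\ell-1}{2}\equiv -1\pmod\ell$, the classes $\bar 1$ and $\bar k$ lie in a common orbit $O_1$, so exactly one orbit sum, $\sigma_{O_1}$, involves $[\bar k]$. Hence any invariant vector lying in $V'$ has zero $\sigma_{O_1}$-coefficient, so $\ker(\mu_2 - I)\cap V'$ is spanned by the remaining $k/d - 1$ orbit sums and $\dim(\mu_2 - I)(V') = (k-1) - (k/d - 1) = k - k/d$. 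I would then argue that $\rho$ does not collapse this image, i.e. $\ker\rho \cap (\mu_2 - I)(V') = 0$, so that $\mathrm{Rank}(U_\ell^R) = k - k/d$.

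The main obstacle is precisely this last point: controlling the relations among the cusp divisors $D_a$. I expect to handle it by showing the $h_a$ are multiplicatively independent modulo constants (equivalently $\rho$ is injective), which should follow from the cusp computation together with the independence implicit in Folsom's rank-$(k-1)$ result, since the two families are assembled from the same Siegel functions $g_{(a/\ell,0)}(\ell\tau)$. Finally, for the ``in particular'' statement, $2$ generating $G$ means $d = k$, giving rank $k-1$; combined with injectivity of $\mathrm{div}$ on $U_\ell^R$ this forces $U_\ell^R \cap \qq(\zeta_\ell)^\times = \{1\}$. To upgrade to the asserted equality $U_\ell^R \times \qq(\zeta_\ell)^\times = U_\ell^+$ I must further check that $\mathrm{div}(U_\ell^R)$ equals the full lattice $\mathrm{div}(U_\ell^+)$ and not merely a finite-index sublattice; I would verify this index-$1$ statement by comparing the exponent vectors of the $s_{\ell,m}$ with Folsom's generators, which span $\mathrm{div}(U_\ell^+)$, and checking that the change of basis is unimodular.
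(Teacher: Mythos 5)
Your proposal is correct and follows essentially the same route as the paper: your group-ring computation of $\dim(\mu_2-I)(V')$ via orbit sums is the paper's factorization of the divisor matrix as $(C-I)A$ with $C$ the permutation matrix for multiplication by $2$, your injectivity of $\rho$ is the paper's appeal to Folsom's linear independence of the vectors $V(m)$ (Proposition 6 of \cite{F1}), and your proposed unimodularity check for the ``in particular'' clause plays the role of the paper's cotorsion-freeness lemma (Lemma \ref{co}). If anything, your hyperplane bookkeeping (noting that $\bar 1$ and $\bar k$ share a $\mu_2$-orbit since $2k\equiv -1$) treats the passage from the $k-1$ functions $s_{\ell,m}$ to the full $k$-row matrix more carefully than the paper does.
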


\begin{remark}
In Theorem 1.iii of \cite{F1}, Folsom writes $U_\ell^C = U_\ell^+$, which leaves implicit the inclusion of the constant functions $\qq(\zeta_\ell)^\times \subset U_\ell$.
\end{remark}

\begin{Example}
When $\ell = 5$, we find that $2$ has order $2$ in $(\zz/5\zz)^\times/\{\pm 1\}$ and thus
\[\text{Rank}(U_5^R) = \text{Rank}(U_5^C) = 1.\]
In this case, both groups are generated by the Rogers-Ramanujan $q$-continued fraction $r(\tau)$ which satisfies
\[\frac{1}{r(\tau)} = s_{5,1}(\tau) = -r_{5, 1}(\tau).\]
When $\ell=17$, the element $2$ has order $4$ in $(\zz/17\zz)^\times/\{\pm 1\}$ so $\mathrm{Rank}(U_{17}^R) = 6$ while $\text{Rank}(U_{17}^C) = 7$.
\end{Example}

In Theorem 1.2 of \cite{F2}, Folsom shows that for $\ell$ prime, the limiting values of the functions $r_{\ell,m}(\tau)$ at the cusp $0$ are generators for the cyclotomic unit group of $\qq(\zeta_\ell)^+$. Because the $r_{\ell,m}(\tau)$ generate $U_\ell^+$, we will be able to express the $s_{\ell, m}(\tau)$ in terms of the $r_{\ell,m}(\tau)$ and make an analogous statement.

\begin{Theorem} \label{cy}
Specializing the functions $\s_{\ell,m}(\tau)$ to the cusp $0$ produces the values
\[\lim_{\tau\rightarrow 0}\s_{\ell,m}(\tau) = \prod_{j=1}^m \zeta_\ell^{-\frac{j}{2}}\frac{1-\zeta_\ell^{2j}}{1-\zeta_\ell^j}\]
lying in the cyclotomic unit group of $\qq(\zeta_\ell)^+$. The rank of the subgroup generated by these cyclotomic units is equal to $\mathrm{Rank}(U_\ell^R)$.
\end{Theorem}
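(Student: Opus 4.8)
\section*{Proof proposal for Theorem \ref{cy}}

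The plan is to compute the limit directly from the Siegel-function product \eqref{defPsi} and then extract the rank from the arithmetic of the resulting cyclotomic units. The cusp $0$ is the image of $\infty$ under $S=\begin{pmatrix}0&-1\\1&0\end{pmatrix}$, so I would substitute $\tau=-1/w$ and let $w\to i\infty$, which turns the evaluation at the cusp $0$ into the extraction of the constant term of a $q$-expansion at $\infty$. Writing $\ell\tau=-1/(w/\ell)$ lets me apply the standard transformation of Siegel functions under $S$, namely $g_{(a_1,0)}(-1/u)=\varepsilon\, g_{(0,-a_1)}(u)$ for a suitable root of unity $\varepsilon$, reducing each factor of \eqref{defPsi} to a Siegel function with vanishing first coordinate evaluated at $u=w/\ell\to i\infty$.

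From the product expansion \eqref{Sprod} one has, as $q_u:=e^{2\pi i u}\to 0$,
\[
g_{(0,b)}(u)=-q_u^{1/12}e^{-\pi i b}\bigl(1-e^{2\pi i b}\bigr)\prod_{n\ge 1}\bigl(1-q_u^{n}e^{2\pi i b}\bigr)\bigl(1-q_u^{n}e^{-2\pi i b}\bigr)\longrightarrow 2i\,q_u^{1/12}\sin(\pi b).
\]
Taking $b=-2j/\ell$ in the numerator and $b=-j/\ell$ in the denominator, the factors $2i\,q_u^{1/12}$ cancel in each quotient, leaving
\[
\lim_{\tau\to 0}\frac{g_{(\frac{2j}{\ell},0)}(\ell\tau)}{g_{(\frac{j}{\ell},0)}(\ell\tau)}=\frac{\sin(2\pi j/\ell)}{\sin(\pi j/\ell)}=2\cos(\pi j/\ell)=\zeta_\ell^{-\frac{j}{2}}\frac{1-\zeta_\ell^{2j}}{1-\zeta_\ell^{j}}.
\]
Since $1\le j\le m\le k-1$ forces $\pi j/\ell<\pi/2$, every factor $2\cos(\pi j/\ell)$ is a positive real number, so the product over $j$ is positive and real; this confirms that the accumulated roots of unity $\varepsilon$ cancel and yields exactly the stated value. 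That each value is a cyclotomic unit of $\qq(\zeta_\ell)^+$ follows from the identification $2\cos(\pi j/\ell)=N_{2j}/N_j$, where $N_a:=\sin(\pi a/\ell)/\sin(\pi/\ell)$ (indices reduced into $\{1,\dots,k\}$ via $N_a=N_{\ell-a}$) are the standard real cyclotomic units; alternatively, by Theorem~1.2 of \cite{F2} these lie in $\lim_{\tau\to 0}U_\ell^C$, which is precisely the cyclotomic unit group of $\qq(\zeta_\ell)^+$.

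For the rank, the assignment $\s_{\ell,m}\mapsto\lim_{\tau\to 0}\s_{\ell,m}(\tau)$ extends to a homomorphism from $U_\ell^R$ to $\qq(\zeta_\ell)^{+,\times}$ whose image is generated by the values $v_m:=\prod_{j=1}^m 2\cos(\pi j/\ell)$. Because $v_m/v_{m-1}=2\cos(\pi m/\ell)$, this image equals $\langle 2\cos(\pi j/\ell):1\le j\le k-1\rangle$. I would then pass to log-vectors: under the real embeddings $v_a:\zeta_\ell\mapsto\zeta_\ell^{a}$ of $\qq(\zeta_\ell)^+$, indexed by $G:=(\zz/\ell\zz)^\times/\{\pm1\}$, one computes $\bigl(\log|2\cos(\pi j/\ell)|_{v_a}\bigr)_a=(\sigma_2-1)\,\pi_j X$, where $X=(\log|2\sin(\pi a/\ell)|)_a$, the operator $\sigma_2$ is multiplication by $2$ on $G$, and $\pi_j$ is multiplication by $j$. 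Hence the $\qq$-span of these log-vectors is $(\sigma_2-1)\cdot\qq[G]X$. The nonvanishing of $L(1,\chi)$ for nontrivial even $\chi$ (equivalently, the multiplicative independence of the cyclotomic units) gives $\widehat{X}(\chi)\ne 0$ for all nontrivial $\chi$, so $\qq[G]X\supseteq\bigoplus_{\chi\ne 1}V_\chi$. Since $\sigma_2$ acts on $V_\chi$ by $\chi(2)-1$, the operator $\sigma_2-1$ annihilates exactly the characters with $\chi(2)=1$; there are $k/d$ such characters (the annihilator of $\langle 2\rangle$, of order $d$), one of which is trivial. Therefore the rank of the image equals $\#\{\chi\ne 1:\chi(2)\ne 1\}=(k-1)-(k/d-1)=k-\tfrac{k}{d}$, which is $\mathrm{Rank}(U_\ell^R)$ by Theorem \ref{rank}.

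The main obstacle is the precise bookkeeping of the automorphy factors of the Siegel functions under the combined scaling-and-$S$ transformation: one must verify that the product of the individual roots of unity $\varepsilon$ is trivial, rather than merely identifying each quotient up to a root of unity, and here the positivity of $\prod_j 2\cos(\pi j/\ell)$ supplies the decisive check. A secondary point requiring care is confirming that the orbit count of multiplication by $2$ on $G$ is governed by exactly the integer $d$ of the statement, so that the character computation produces $k-k/d$ in agreement with Theorem \ref{rank}; the nonvanishing input $\widehat{X}(\chi)\ne 0$ is classical but is what prevents the rank from dropping below $k-k/d$.
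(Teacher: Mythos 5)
Your proposal is correct, but it takes a genuinely different route from the paper on both halves of the theorem. For the limit, the paper never touches the $S$-transformation of Siegel functions: it rewrites $s_{\ell,m}(\tau)=\prod_{j=1}^m(-1)^j r_{\ell,k+1-2j}(\tau)/r_{\ell,k+1-j}(\tau)$ using \eqref{defPsi} and \eqref{r1} and then simply substitutes Folsom's closed-form limit for the $r_{\ell,m}$ at the cusp $0$ from \cite{F2}; your direct computation via $\tau\mapsto-1/w$ and the expansion of $g_{(0,b)}$ is self-contained and reaches the same value $\prod_j 2\cos(\pi j/\ell)$. On the root-of-unity bookkeeping you flag as the main obstacle: it is actually harmless, because within each quotient $g_{(2j/\ell,0)}/g_{(j/\ell,0)}$ the $\eta^2$ factors cancel and \eqref{k-trans} is an exact identity for Klein forms, so no $\varepsilon$ survives; alternatively your positivity check does close the issue once you note that $s_{\ell,m}(it)$ is a product of manifestly positive real factors for $t>0$, so the limit must be the positive determination. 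For the rank, the paper again leans on Folsom: the limit map $U_\ell^C\times\{\pm1\}\to E^C_{\qq(\zeta_\ell)^+}$ is an isomorphism, so the image of $U_\ell^R$ automatically has rank $\mathrm{Rank}(U_\ell^R)$ with no further computation. Your argument instead recomputes the rank of the image from scratch via logarithmic embeddings, the operator $\sigma_2-1$ on $\qq[G]X$, and the nonvanishing $\widehat{X}(\chi)\neq0$; this is essentially the same Frobenius-determinant input that underlies Lemma \ref{rkUc}, so you prove $k-k/d$ twice where the paper proves it once and transports it. What your route buys is independence from Folsom's isomorphism (and a direct verification that the cyclotomic side really has rank $k-k/d$, not merely the same rank as $U_\ell^R$); what the paper's route buys is brevity and the structural statement that $\lim$ is rank-preserving on all of $U_\ell^+$.
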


\begin{Example}
When $\ell=5$, we have
\[\lim_{\tau \rightarrow 0} s_{5,1}(\tau) = \zeta_5^{-\frac{1}{2}}\frac{1-\zeta_5^2}{1-\zeta_5} = \phi,\]
where $\phi$ is the golden ratio. In addition, $\phi$ generates the cyclotomic unit group of the number field $\qq(\zeta_5)^+$.
\end{Example}

Finally, Folsom shows that for CM points $\tau$, the singular values $r_{\ell, m}(\tau)$ generate particular ray class fields of $\qq(\tau)$ from its Hilbert class field. We show that the singular values $\s_{\ell,m}(\tau)$ generate the same extensions.

\begin{Theorem} \label{cm}
Let $\tau$ be a CM point and let $K = \qq(\tau)$. Let $\ell = 2k+1\geq 5$ be prime. Then for any integer $1 \leq m \leq k-1$, the ray class field $K_\ell$ of modulus $\ell$ is generated over the Hilbert class field by $\s_{\ell,m}(\tau)$. That is,
\[K_\ell = K(j(\tau), \s_{\ell,m}(\tau)).\]
\end{Theorem}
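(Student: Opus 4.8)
The plan is to prove the two inclusions $K(j(\tau), \s_{\ell,m}(\tau)) \subseteq K_\ell$ and $K_\ell \subseteq K(j(\tau), \s_{\ell,m}(\tau))$ separately. The first is routine: by Theorem~\ref{pu}, $\s_{\ell,m}(\tau) = \Psi_1(m, k-m; \tau)$ is a modular unit of level $\ell$, so it belongs to $\mathcal{F}_\ell$ and, by the product formula \eqref{defPsi}, its $q$-expansion has coefficients in $\qq(\zeta_\ell)$. The main theorem of complex multiplication then places the value $\s_{\ell,m}(\tau)$ in the ray class field $K_\ell$, and since $j(\tau)$ already generates the Hilbert class field $H := K(j(\tau)) \subseteq K_\ell$, the first inclusion follows.

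For the reverse inclusion it suffices, by Galois theory, to show that the stabilizer of $\s_{\ell,m}(\tau)$ in $\mathrm{Gal}(K_\ell/H)$ is trivial. I would set this up through Shimura reciprocity: the CM structure of $\tau$ yields an embedding of the torus $T := (\mathcal{O}_K/\ell\mathcal{O}_K)^\times/\mathcal{O}_K^\times \cong \mathrm{Gal}(K_\ell/H)$ into $\mathrm{GL}_2(\zz/\ell\zz)/\{\pm 1\}$, coming from the action of $\mathcal{O}_K$ on the lattice $[\tau,1]$, and for $\sigma$ with image $M_\sigma \in T$ one has $\sigma(\s_{\ell,m}(\tau)) = \s_{\ell,m}^{M_\sigma}(\tau)$, where $\s_{\ell,m}^{M}$ is obtained from \eqref{defPsi} by letting $M$ act on the index vectors of the Siegel functions together with the $\det$-action on $\zeta_\ell$. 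The essential observation is that, although every index $(\tfrac{j}{\ell},0)$ and $(\tfrac{2j}{\ell},0)$ in \eqref{defPsi} has vanishing second coordinate, the argument $\ell\tau$ realizes them as the $\ell$-torsion points $j\tau$ on $\mathbb{C}/[\ell\tau,1]$, so a general $M_\sigma$ moves them off the first axis; this is why the orbit of $\s_{\ell,m}(\tau)$ can attain the full size $|T| = [K_\ell : H]$.

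I would then compute the stabilizer in two stages. First, at the level of functions: comparing the cusp divisors of $\s_{\ell,m}$ and $\s_{\ell,m}^{M}$ — which are read off from the values $B_2(\langle \cdot \rangle)$ of the Siegel functions at the cusps — reduces the identity $\s_{\ell,m}^{M} = \s_{\ell,m}$ to the requirement that $M$ preserve the signed configuration $\{\pm(j,0), \pm(2j,0) : 1 \le j \le m\}$, together with a matching of the leading roots of unity. Second, I would intersect the resulting subgroup with $T$ and show the intersection is $\{\pm 1\}$. This combinatorial-arithmetic step is the main obstacle and is genuinely more delicate than Folsom's treatment of the single-quotient units $r_{\ell,m}$: because $\s_{\ell,m}$ is a \emph{product} of $m$ Siegel quotients, preservation of its divisor imposes a whole system of congruences modulo the prime $\ell$ on $M$, and one must show that the only solutions lying in the CM torus $T$ are $\pm 1$. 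I expect to prove this from the non-degeneracy of the multiset $\{j, 2j : 1 \le j \le m\}$ modulo $\ell$ (the same kind of independence input underlying Theorem~\ref{rank}) together with primality of $\ell$, treating the split, inert, and ramified cases for $\ell$ in $K$ uniformly where possible.

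Finally, I would address the standard subtlety that the stabilizer of the \emph{value} $\s_{\ell,m}(\tau)$ might a priori exceed the stabilizer of the \emph{function} $\s_{\ell,m}$ under $T$, since distinct conjugate functions could in principle collide at the particular point $\tau$. This is exactly the point resolved in Folsom's Theorem 1.2 of \cite{F2} for the units $r_{\ell,m}$, and I would handle it by the same mechanism, using that the conjugate values $\s_{\ell,m}^{M_\sigma}(\tau)$ are singular values of modular units (algebraic integer units by Theorem~\ref{au}) separated by their explicit Siegel-function data. Combined with the function-level computation above and the first inclusion, this yields $K_\ell = K(j(\tau), \s_{\ell,m}(\tau))$.
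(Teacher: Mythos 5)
Your overall architecture --- reduce everything to a computation of the stabilizer of $\s_{\ell,m}$ as a \emph{function} inside $\mathrm{Gal}(\mathcal{F}_\ell/\mathcal{F}_1)\simeq\mathrm{GL}_2(\zz/\ell\zz)/\{\pm I\}$, then run Shimura reciprocity exactly as Folsom does for the $r_{\ell,m}$ --- is the same as the paper's, which proves exactly one new statement (Lemma \ref{cmkey}: $\mathcal{F}_1(\s_{\ell,m})$ is the fixed field of $\{\gamma_d\}$) and then cites Folsom's Propositions 7.2 and 7.3 of \cite{F2} for everything involving $K$, the torus $T$, and the value-versus-function subtlety. The gap in your proposal sits precisely at the one step that is new. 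Your claim that comparing cusp divisors ``reduces $\s_{\ell,m}^M=\s_{\ell,m}$ to the requirement that $M$ preserve the signed configuration $\{\pm(j,0),\pm(2j,0)\}$'' is unproved and, as stated, not sufficient: for $\ell=5$, $m=1$ the matrix $\mathrm{diag}(2,1)$ permutes the set $\{\pm(1,0),\pm(2,0)\}$ but swaps numerator and denominator indices up to sign, sending $s_{5,1}$ to a root of unity times $s_{5,1}^{-1}$; set-preservation of indices says nothing about which indices occur in the numerator or with what multiplicity. The paper instead pins the stabilizer down directly: $\ord_\infty \s_{\ell,m}=\frac{m(m+1)}{4}(2m+1-\ell)\neq 0$; if $c\not\equiv 0\pmod{\ell}$ the Bernoulli sums run over full residue systems and give $\ord_\infty(\gamma\cdot \s_{\ell,m})=0$, so $c\equiv 0$; equating orders at $\infty$ then forces $a^2\equiv 1\pmod{\ell}$; and the residual translation multiplies $\s_{\ell,m}$ by $e\bigl(\tfrac{b}{\ell}\cdot\tfrac{m(m+1)}{4}(2m+1-\ell)\bigr)$, forcing $b\equiv 0$ because $1\leq m\leq k-1$. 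You would need an argument of this strength; your sketch does not supply one.

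Two further points. First, before applying the $\mathrm{GL}_2(\zz/\ell\zz)$-action you must rewrite the quotients $g_{(2j/\ell,0)}(\ell\tau)/g_{(j/\ell,0)}(\ell\tau)$ of \eqref{defPsi} as level-$\ell$ functions of $\tau$; the paper does this via \eqref{seq}, using $g(m)=\prod_{s=0}^{\ell-1}g_{(m/\ell,s/\ell)}(\tau)$ up to a root of unity, and your ``moves the indices off the first axis'' heuristic is then subsumed by the product over $s$. Second, once the function-level stabilizer is known to equal $\{\gamma_d\}$, its intersection with the CM torus is trivial for a uniform reason (an element of $(\mathcal{O}_K/\ell\mathcal{O}_K)^\times$ whose matrix on the lattice is $\pm\mathrm{diag}(1,d)$ is congruent to a rational integer mod $\ell$, hence acts as a scalar, hence is $\pm I$), so the split/inert/ramified case analysis you anticipate is unnecessary; and the separation of conjugate \emph{values} at $\tau$ is exactly the content of Folsom's Proposition 7.3, which applies verbatim once the fixed field of $\s_{\ell,m}$ is shown to coincide with that of $r_{\ell,m}$. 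The efficient way to finish is therefore to prove the analogue of Lemma \ref{cmkey} and then quote \cite{F2}, rather than to redo the class field theory.
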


We give an example of this theorem in the case $\ell = 5$.
\begin{Example}
Let $\rho$ be the primitive cube root of unity in the upper half-plane. From an evaluation due to Ramanujan, we have that
\[s_{5,1}(\rho) = \frac{1}{r(\rho)} = \frac{4\zeta_{10}}{\sqrt{30+6\sqrt{5}}-3-\sqrt{5}}.\]
Meanwhile, we have $j(\rho) = 0 \in \qq(\rho)$. The ray class field of modulus $5$ is therefore generated by $s_{5,1}(\rho)$, giving
\[\qq(\rho)_5 = \qq(\rho, s_{5,1}(\rho)).\]
\end{Example}

\begin{remark}
A recent paper of Rains and Warnaar \cite{RW} gives further identities of a type similar to those in \eqref{gRR1} and \eqref{gRR2}. Although their algebraic properties are not studied, we expect similar theorems to hold for functions arising from appropriate quotients of these $q$-series.
\end{remark}

This paper is organized as follows. In the next section, we recall the theory of Siegel functions developed by Kubert and Lang in \cite{KL}. We then prove a lemma using their $q$-series that will allow us to prove Theorems \ref{pu} and \ref{au}. In Section 3, we use the results of \cite{F2} regarding independence of Siegel functions to prove the rank formula in Theorem \ref{rank}. Drawing on the cyclotomic theory in \cite{F2}, we proceed to prove the corresponding statement in Theorem \ref{cy}. In Section 4, we recall the Galois action for the field extension $\mathcal{F}_\ell/\mathcal{F}_1$, determine the subgroup fixing the functions $\s_{\ell,m}(\tau)$, and prove Theorem \ref{cm}.

\section*{Acknowledgements}
This research was carried out during the 2015 REU at Emory University. The author would like to thank Ken Ono for suggesting this problem and providing guidance, Michael Mertens, Sarah Trebat-Leader, and Michael Griffin for useful conversations, and the NSF for its support.

\section{Siegel functions}

The proofs of our theorems will rely on the theory of Siegel functions as developed by Kubert and Lang. Both families of units turn out to have nice expressions in terms of these functions, which are in a sense the building blocks of modular units. 

For any positive integer $\ell$, the \textit{principal congruence subgroup} $\Gamma(\ell) \subseteq \text{SL}_2(\zz)$ is defined by
\[\Gamma(\ell) = \left\{\left(\begin{array}{cc} a & b \\ c & d \end{array}\right) \in \text{SL}_2(\zz) : 
\left(\begin{array}{cc} a & b \\ c & d \end{array}\right) \equiv \left(\begin{array}{cc} 1 & 0 \\ 0 & 1 \end{array}\right) \pmod \ell
 \right\}.\]
 These groups act on the upper-half plane $\mathbb{H}$ by $\gamma\tau := \frac{a\tau + b}{c\tau + d}$ where $\gamma = {\tiny \left(\begin{array}{cc} a & b \\ c & d \end{array}\right)}$. A meromorphic function $f$ defined on $\mathbb{H}$ is a 
 \textit{modular function of level $\ell$} if it is invariant under this action, i.e. if $f(\gamma\tau) = f(\tau)$ for all $\gamma \in \Gamma(\ell)$, and it is meromorphic at the cusps. The set of such functions form a field. We denote by $\mathcal{F}_\ell$ the subfield of these functions whose Fourier expansions are defined over $\qq(\zeta_\ell)$ where $\zeta_\ell := e^{2\pi i/\ell}$.

\subsection{Basic facts about Siegel functions}
Let $\B(z):= \{z\}^2 - \{z\} + \frac{1}{6}$ be the second Bernoulli polynomial evaluated at the fractional part of its argument and set $e(x) := e^{2\pi i x}$ . For each $a = (a_1,a_2) \in \frac{1}{\ell} \mathbb{Z} \backslash \mathbb{Z}$, we define the \textit{Siegel function} $g_a(\tau)$ by the $q$-series
\begin{equation} \label{Sprod}
g_a(\tau) := -q^{\frac{1}{2}\B(a_1)}e(a_2(a_1-1)/2) \prod_{n=1}^\infty (1 - q^{n-1+a_1}e(a_2))(1-q^{n-a_1}e(-a_2)),
\end{equation}
The \textit{Klein function} $\mathfrak{k}_a$ is defined as
\[\mathfrak{k}_a(\tau) := \frac{g_a(\tau)}{\eta(\tau)^2},\]
where $\eta(\tau) := q^{1/24} \prod_{n=1}^\infty(1 - q^n)$ is the \textit{Dedekind $\eta$-function}. The Klein functions satisfy the transformation equation
\begin{equation} \label{k-trans}
\mathfrak{k}_a(\gamma\tau) = (c\tau+d) ^{-1}\mathfrak{k}_{a\gamma}(\tau).
\end{equation}
Although $g_a(\tau)$ is not modular for $\Gamma(\ell)$, the above transformation property, together with the transformation properties of the Dedekind $\eta$-function, give rise to the following criterion for determining when a particular product of Siegel functions is. The Siegel functions have no zeros or poles in $\mathbb{H}$ so such functions will be units.

\begin{Theorem}[Ch.~3, Thm.~5.2 of \cite{KL}] \label{puhammer}
Let $\ell \geq 5$ be an odd integer and let $\{m(a)\}_{a \in \frac{1}{\ell} \zz^2 \backslash \zz^2}$ be a set of integers. Then the product of Siegel functions
\[\prod_{a \in \frac{1}{\ell} \zz^2 \backslash \zz^2} g_a(\tau)^{m(a)}\]
is a modular unit of level $\ell$ if and only if
\[\sum_{a\in\frac{1}{\ell}\zz^2 \backslash \zz^2} m(a)a_1^2 \equiv \sum_{a\in\frac{1}{\ell}\zz^2 \backslash \zz^2} m(a)a_2^2 \equiv \sum_{a\in\frac{1}{\ell}\zz^2 \backslash \zz^2} m(a)a_1a_2 \equiv 0 \pmod \ell\]
and
\[\sum_{a\in\frac{1}{\ell}\zz^2 \backslash \zz^2} m(a) \equiv 0 \pmod {12}.\] 
\end{Theorem}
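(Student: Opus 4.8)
The plan is to verify the two defining properties of a modular unit of level $\ell$ separately, and to see that only one of them carries content. Writing $P(\tau):=\prod_a g_a(\tau)^{m(a)}$, I first observe that $P$ has neither zeros nor poles on $\mathbb{H}$: in the product \eqref{Sprod} each factor $1-q^{\,n-1+a_1}e(a_2)$ and $1-q^{\,n-a_1}e(-a_2)$ is nonzero for $\tau\in\mathbb{H}$ (once $a_1$ is reduced to $[0,1)$ the arguments have modulus $<1$, or else $e(\pm a_2)\neq1$), so every $g_a$ is holomorphic and nonvanishing there, and $P$ is meromorphic at the cusps because each $g_a$ is. Since the Fourier coefficients of $g_a$ lie in $\qq(\zeta_\ell)$, any $P$ lands in $\mathcal{F}_\ell$. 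Consequently $P$ is a unit the instant it is shown to be modular, and the entire content of the theorem is the equivalence between $\Gamma(\ell)$-invariance of $P$ and the four congruences. So after this reduction I focus exclusively on the scalar by which $P$ is multiplied under a general $\gamma\in\Gamma(\ell)$.

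Next I record two transformation laws. For the full modular group, writing $g_a=\mathfrak{k}_a\,\eta^2$ and combining the Klein transformation \eqref{k-trans} with $\eta(\gamma\tau)^2=\varepsilon(\gamma)(c\tau+d)\,\eta(\tau)^2$, where $\varepsilon(\gamma)$ is a twelfth root of unity from the Dedekind $\eta$-multiplier, the weight factors $(c\tau+d)^{\mp1}$ cancel and one gets $g_a(\gamma\tau)=\varepsilon(\gamma)\,g_{a\gamma}(\tau)$ for all $\gamma\in\mathrm{SL}_2(\zz)$, with $a\gamma$ the row vector $a$ times $\gamma$. From the product \eqref{Sprod} I then extract the quasi-periodicity $g_{a+b}(\tau)=\epsilon(a,b)\,g_a(\tau)$ for $b\in\zz^2$, where $\epsilon(a,b)$ is an explicit root of unity: shifting $a_1,a_2$ by integers leaves $\B(a_1)$ and each $e(\pm a_2)$ unchanged, only reindexing the infinite product and altering the prefactor $-q^{\frac12\B(a_1)}e(a_2(a_1-1)/2)$ by a root of unity.

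Now I specialize to $\gamma\in\Gamma(\ell)$. Writing $\gamma=I+\ell M$ with $M$ an integer matrix and using $a\in\frac1\ell\zz^2$, one has $a\gamma=a+b_\gamma(a)$ with $b_\gamma(a)=(\ell a)M\in\zz^2$, so the two laws combine to
\[P(\gamma\tau)=\varepsilon(\gamma)^{\sum_a m(a)}\Big(\prod_a \epsilon\big(a,b_\gamma(a)\big)^{m(a)}\Big)P(\tau).\]
Thus $P$ is modular for $\Gamma(\ell)$ exactly when this scalar is $1$ for every $\gamma\in\Gamma(\ell)$, and since $\Gamma(\ell)$ is finitely generated it suffices to impose this on a generating set. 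Evaluating on the parabolic generator $\begin{pmatrix}1&\ell\\0&1\end{pmatrix}$ produces a phase governed by $\sum_a m(a)a_1^2$ (with a sign governed by $\sum_a m(a)a_1$), on $\begin{pmatrix}1&0\\\ell&1\end{pmatrix}$ a phase governed by $\sum_a m(a)a_2^2$, and on a generator mixing both off-diagonal entries the cross phase $\sum_a m(a)a_1a_2$; forcing the full scalar to be trivial on all of these yields the three quadratic congruences modulo $\ell$, while the residual $(-1)$-signs in the $\epsilon(a,b_\gamma(a))$ together with $\varepsilon(\gamma)^{\sum m(a)}$ assemble into $\sum_a m(a)\equiv0\pmod{12}$. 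Conversely, these four congruences trivialize the scalar on the generators, hence on all of $\Gamma(\ell)$.

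The main obstacle is precisely this last bookkeeping: cleanly separating the roots of unity. The phase factors $e(\cdots)$ are where the mod-$\ell$ quadratic conditions live, but the signs $(-1)^{\,b_1+b_2+b_1b_2}$ inside $\epsilon(a,b_\gamma(a))$ and the twelfth root $\varepsilon(\gamma)$ are entangled, and one must check that, once the quadratic congruences hold, these collapse to exactly $\sum_a m(a)\equiv0\pmod{12}$ and to no stronger or weaker divisibility. Additional care is needed to choose a generating set of $\Gamma(\ell)$ small enough for a finite verification yet complete, and to respect the constraint $\det\gamma=1$, which ties the diagonal entries of $M$ together and must be handled when isolating the three independent quadratic forms $a_1^2$, $a_2^2$, and $a_1a_2$.
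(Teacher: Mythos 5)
This statement is quoted verbatim from Kubert--Lang \cite{KL} (Ch.~3, Thm.~5.2); the paper supplies no proof of its own, so your attempt can only be measured against the standard argument. Your skeleton is the right one --- reduce to the multiplier system via $g_a=\mathfrak{k}_a\eta^2$, use \eqref{k-trans} together with the $\eta$-multiplier to get $g_a(\gamma\tau)=\psi(\gamma)g_{a\gamma}(\tau)$, and then use quasi-periodicity $g_{a+b}=\epsilon(a,b)g_a$ for $b=a(\gamma-I)\in\zz^2$ --- but the proof has two genuine gaps. First, the three matrices you propose do not generate $\Gamma(\ell)$. For $\ell\geq5$ the group $\Gamma(\ell)$ is free of rank $2g+c-1$ (e.g.\ rank $11$ for $\ell=5$, rank $29$ for $\ell=7$), and by Nielsen--Schreier a subgroup generated by three elements has infinite index in it. So "evaluate the character on these generators" verifies triviality only on an infinite-index subgroup and establishes neither direction of the equivalence. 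The fix is available to you --- your formula $c(\gamma)=\psi(\gamma)^{\sum m(a)}\prod_a\epsilon(a,b_\gamma(a))^{m(a)}$ is valid for \emph{every} $\gamma=I+\ell M\in\Gamma(\ell)$, so you should analyze it for arbitrary $M$ subject to $\det(I+\ell M)=1$ rather than on a purported generating set --- but that analysis is not done.

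Second, the step you yourself flag as "the main obstacle" is the entire content of the theorem, and it is asserted rather than proved. The quasi-period $\epsilon(a,b)=(-1)^{b_1b_2+b_1+b_2}e\bigl(\tfrac{b_2a_1-b_1a_2}{2}\bigr)$ depends on $a$ through a half-integral phase; with $b=\ell aM$ this phase is quadratic in $a$ with denominator $2\ell$, so one must show that the mod-$2$ ambiguities cancel against the signs $(-1)^{b_1b_2+b_1+b_2}$ to leave conditions exactly mod $\ell$ (not mod $2\ell$), and that the leftover signs together with $\psi(\gamma)^{\sum m(a)}$ collapse to exactly $\sum_a m(a)\equiv0\pmod{12}$. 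For the "only if" direction of the last congruence you also need that $\psi$ (the multiplier of $\eta^2$, a character of order $12$ on $\mathrm{SL}_2(\zz)$) still attains a primitive twelfth root of unity on $\Gamma(\ell)$; this holds because $\mathrm{SL}_2(\zz/\ell\zz)$ is perfect for prime $\ell\geq5$, so $\Gamma(\ell)$ surjects onto $\mathrm{SL}_2(\zz)^{\mathrm{ab}}\cong\zz/12\zz$, but for composite $\ell$ divisible by $3$ (e.g.\ $\ell=9$, which the hypothesis "$\ell\geq5$ odd" permits) the image is a proper subgroup and the bookkeeping changes. None of this is carried out, so as written the argument establishes only the qualitative shape of the conditions, not the precise moduli that the theorem asserts.
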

The next result concerns the algebraic properties of the singular values of certain quotients of Siegel functions.
\begin{Theorem}[Ch.~1, Thm.~2.2 of \cite{KL}] \label{auhammer}
Let $\ell$ be an integer and suppose $a \in \frac{1}{\ell}\zz^2$ has exact period $\ell$ mod $\zz^2$. If $c \in \zz$ with $\text{gcd}(c, \ell) = 1$ then $g_{ca}(\tau)/g_a(\tau)$ is a unit over $\zz[j(\tau)]$ for any CM point $\tau$.
\end{Theorem}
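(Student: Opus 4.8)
The plan is to prove that $g_{ca}/g_a$ is a unit in the integral closure of $\zz[j]$ inside the modular function field and then specialize at the CM point $\tau$, where $j(\tau)$ is an algebraic integer by the classical theory of complex multiplication. Writing $g_a = \mathfrak{k}_a\,\eta^2$, the Dedekind factor cancels in the quotient, so $g_{ca}/g_a = \mathfrak{k}_{ca}/\mathfrak{k}_a$; by the transformation law \eqref{k-trans} the weight $-1$ automorphy factors cancel, and a suitable power $(g_{ca}/g_a)^{12\ell}$ is a genuine modular function of level $\ell$, so the quotient is algebraic over $\qq(j)$ and its value at $\tau$ is a well-defined algebraic number. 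Since the Siegel functions are holomorphic and nonvanishing on $\mathbb{H}$, the quotient has neither zeros nor poles there, and all its zeros and poles are concentrated at the cusps. Finally, both $g_{ca}/g_a$ and its reciprocal have the same shape: as $\gcd(c,\ell)=1$ the element $ca$ again has exact period $\ell$, and choosing $c'$ with $cc'\equiv 1\pmod\ell$ exhibits $g_a/g_{ca}$ as, up to a root of unity, a quotient $g_{c'(ca)}/g_{ca}$ of the same form. Hence it suffices to prove integrality over $\zz[j]$; the unit statement follows by applying the same conclusion to the reciprocal.

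To establish integrality over $\zz[j]$ I would invoke the standard criterion that a modular function holomorphic on $\mathbb{H}$ is integral over $\zz[j]$ exactly when its $q$-expansions at every cusp have algebraic-integer coefficients. The cusps of $\Gamma(\ell)$ are represented by matrices $\gamma \in \mathrm{SL}_2(\zz)$, and by \eqref{k-trans} the expansion of $g_{ca}/g_a$ at the cusp $\gamma$ is, up to a root of unity, the expansion at $\infty$ of $g_{c(a\gamma)}/g_{a\gamma}$. This has exactly the same form with $a$ replaced by $b := a\gamma$, which still has exact period $\ell$ and is still paired with the same multiplier $c$. The problem therefore reduces to a single computation: for every $b \in \tfrac1\ell\zz^2$ of exact period $\ell$, show that the $q$-expansion of $g_{cb}/g_b$ at $\infty$ has algebraic-integer coefficients, and likewise for its inverse.

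This last step comes directly from the product \eqref{Sprod}, taking $0 \le b_1 < 1$. When $b_1 \ne 0$, every factor $(1 - q^{\,n-1+b_1}e(b_2))$ and $(1 - q^{\,n-b_1}e(-b_2))$ has strictly positive $q$-exponent, so the infinite product is a unit power series over $\overline{\zz}$; thus $g_b$ is a root of unity times a fractional power of $q$ times such a unit, and the same holds for $g_{cb}$. Here one uses that $cb_1 \equiv 0 \pmod 1$ would force $b_1 \equiv 0$ because $\gcd(c,\ell)=1$, so numerator and denominator fall into the same case. The ratio of two unit power series is again a unit power series, whence $g_{cb}/g_b$ and its reciprocal have algebraic-integer coefficients. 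The only delicate case is $b_1 = 0$, where the $n=1$ factor degenerates to the constant $1 - e(b_2) = 1 - \zeta_\ell^{\,\ell b_2}$, which is an algebraic integer but not a unit. In this case $\ell b_2$ is coprime to $\ell$ by exact periodicity, both $g_{cb}$ and $g_b$ carry such a constant, and their ratio contributes precisely the cyclotomic factor $(1 - \zeta_\ell^{\,c\ell b_2})/(1 - \zeta_\ell^{\,\ell b_2})$, which is a genuine cyclotomic unit because $\gcd(c,\ell)=1$. I expect this to be the main obstacle, and it is exactly the point where both hypotheses $\gcd(c,\ell)=1$ and exact period $\ell$ are indispensable: without them the ratio of the cyclotomic constants, or the matching of the two cases, could fail to produce a unit.

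Combining the cases, $g_{cb}/g_b$ and its reciprocal have algebraic-integer $q$-coefficients at every cusp, so $g_{ca}/g_a$ lies in the integral closure of $\zz[j]$ and is a unit there. Specializing at the CM point $\tau$, where $j(\tau)$ is an algebraic integer, the value $g_{ca}(\tau)/g_a(\tau)$ is then an algebraic integer whose inverse is also an algebraic integer; that is, it is a unit over $\zz[j(\tau)]$, as claimed.
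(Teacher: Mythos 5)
The paper does not prove this statement: it is imported verbatim from Kubert--Lang (it is their ``Siegel units are units'' theorem), so there is no in-paper proof to compare against. Your argument is correct and is essentially the classical Kubert--Lang proof: reduce to integrality of both $g_{ca}/g_a$ and its reciprocal over $\zz[j]$ via the $q$-expansion criterion, use the transformation law to turn the expansion at an arbitrary cusp into an expansion at $\infty$ of the same shape with $a$ replaced by $a\gamma$ (which still has exact period $\ell$), and observe from \eqref{Sprod} that the only non-unit contribution is the degenerate constant $1-e(b_2)$ in the case $b_1\equiv 0$, where exact periodicity and $\gcd(c,\ell)=1$ make the ratio $(1-\zeta_\ell^{c\ell b_2})/(1-\zeta_\ell^{\ell b_2})$ a cyclotomic unit. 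The two points you pass over quickly are both handled correctly in spirit: the quotient is only modular up to roots of unity, so one formally applies the integrality criterion to the genuinely modular power $(g_{ca}/g_a)^{12\ell}$ and descends by transitivity of integrality, and the final specialization uses that an element integral over $\zz[j]$ together with its inverse evaluates at a CM point to an algebraic integer unit because $j(\tau)$ is an algebraic integer.
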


\begin{remark}
By unit over $\zz[j(\tau)]$ we mean a unit in the ring $\zz[j(\tau),g_{ca}(\tau)/g_a(\tau)]$. In particular, since $j(\tau)$ is an algebraic integer, this implies $g_{ca}(\tau)/g_a(\tau)$ is an algebraic integer unit.
\end{remark}

\subsection{Proof of Theorems \ref{pu} and \ref{au}}
The expressions for the modular units we are interested in will involve the following types of products of Siegel functions.
For fixed $\ell = 2k+1$, let
\[g(m) := \prod_{s=0}^{\ell-1} g_{(m/\ell, s/\ell)}(\tau).\]
In this notation, Theorem 1.ii of \cite{F1} becomes
\begin{equation} \label{rs}
r_{\ell, k+1-m}(\tau) = (-1)^{m-1}e\left(-\frac{k(m-1)}{2\ell}\right) \frac{g(m)}{g(1)}.
\end{equation}
The following lemma will allow us to easily move between equivalent descriptions of our units.
\begin{Lemma}
For any integer $m$, we have
\[g(m) = e\left(\frac{k(m-\ell)}{2\ell}\right)g_{(m/\ell, 0)}(\ell \tau).\]
In particular, we can write
\begin{equation} \label{r1}
r_{\ell, k+1-m}(\tau) = (-1)^{m-1} \frac{g_{(m/\ell,0)}(\ell \tau)}{g_{(1/\ell,0)}(\ell\tau)}
\end{equation}
and
\begin{equation} \label{seq}
s_{\ell,m}(\tau) = e\left(-\frac{km(m+1)}{4\ell}\right)\prod_{j=1}^m \frac{g(2j)}{g(j)}.
\end{equation}
\end{Lemma}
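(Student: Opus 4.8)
The plan is to establish the single displayed identity by a direct computation from the $q$-product definition \eqref{Sprod}, after which the two ``in particular'' formulas follow by routine substitution. Writing $a = (m/\ell, s/\ell)$, every factor $g_{(m/\ell, s/\ell)}(\tau)$ in the product $g(m)$ shares the same Bernoulli value $\B(m/\ell)$, since this depends only on the first coordinate $a_1 = m/\ell$. I would therefore split $g(m) = \prod_{s=0}^{\ell-1} g_{(m/\ell, s/\ell)}(\tau)$ into four pieces: the sign $(-1)^\ell$, the power $q^{\frac{\ell}{2}\B(m/\ell)}$, the product of root-of-unity prefactors $\prod_{s=0}^{\ell-1} e(\tfrac{s(m-\ell)}{2\ell^2})$, and the product over $s$ of the two infinite products.

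The key computational input is the cyclotomic factorization $\prod_{s=0}^{\ell-1}(1 - Xe(s/\ell)) = 1 - X^\ell$. Applying this with $X = q^{n-1+m/\ell}$, and — after noting that $\{-s \bmod \ell\} = \{s \bmod \ell\}$ — with $X = q^{n-m/\ell}$, collapses the double product into $\prod_{n=1}^\infty (1 - q^{\ell n - \ell + m})(1 - q^{\ell n - m})$. This is exactly the infinite product appearing in $g_{(m/\ell, 0)}(\ell\tau)$, since replacing $\tau$ by $\ell\tau$ sends $q$ to $q^\ell$ while the second coordinate $0$ makes the root-of-unity prefactor trivial. The remaining prefactors match cleanly: because $\ell$ is odd, $(-1)^\ell = -1$ agrees with the single sign of $g_{(m/\ell,0)}(\ell\tau)$, and the power $q^{\frac{\ell}{2}\B(m/\ell)}$ is identical. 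The only surviving discrepancy is $\prod_{s=0}^{\ell-1} e(\tfrac{s(m-\ell)}{2\ell^2}) = e\!\left(\tfrac{(m-\ell)(\ell-1)}{4\ell}\right)$, evaluated using $\sum_{s=0}^{\ell-1} s = \ell(\ell-1)/2$; substituting $\ell - 1 = 2k$ rewrites this exponent as $\tfrac{k(m-\ell)}{2\ell}$, which is precisely the claimed prefactor.

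For the two remaining formulas I would feed the main identity into the known expressions. For \eqref{r1}, I take the quotient $g(m)/g(1)$ inside \eqref{rs}; the two prefactors combine to $e(\tfrac{k(m-1)}{2\ell})$, which exactly cancels the factor $e(-\tfrac{k(m-1)}{2\ell})$ already present in \eqref{rs}, leaving $(-1)^{m-1} g_{(m/\ell,0)}(\ell\tau)/g_{(1/\ell,0)}(\ell\tau)$. For \eqref{seq}, I start from the product expression \eqref{defPsi} for $s_{\ell,m}(\tau) = \Psi_1(m, k-m; \tau)$ and substitute $g_{(m/\ell,0)}(\ell\tau) = e(-\tfrac{k(m-\ell)}{2\ell}) g(m)$ into each factor; the prefactor of the $j$-th term simplifies to $e(-\tfrac{kj}{2\ell})$, and the product over $1 \le j \le m$ with $\sum_{j=1}^m j = m(m+1)/2$ produces the stated constant $e(-\tfrac{km(m+1)}{4\ell})$.

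The computation is entirely elementary, so there is no serious obstacle; the one place demanding care is the bookkeeping of the root-of-unity prefactors, where the summation $\sum_{s=0}^{\ell-1} s$ together with the identity $\ell - 1 = 2k$ must be tracked precisely to reproduce the constants in \eqref{rs} and \eqref{defPsi}. One should also tacitly assume $m \not\equiv 0 \pmod \ell$, so that the $s=0$ term $g_{(m/\ell,0)}$ is a genuine Siegel function; this holds automatically in every application, since there $1 \le m \le k-1$ and the relevant arguments $2j$ satisfy $2j \le \ell - 3 < \ell$.
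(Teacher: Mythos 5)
Your proposal is correct and takes essentially the same route as the paper's proof: expand each factor of $g(m)$ via \eqref{Sprod}, collapse the product over $s$ using the cyclotomic identity $\prod_{s=0}^{\ell-1}\bigl(1-Xe(s/\ell)\bigr)=1-X^{\ell}$, and track the root-of-unity prefactors via $\sum_{s=0}^{\ell-1}s=\ell(\ell-1)/2$ together with $\ell-1=2k$. The derivations of \eqref{r1} and \eqref{seq} by substitution into \eqref{rs} and \eqref{defPsi} likewise match the paper, and your remark that $m\not\equiv 0\pmod{\ell}$ holds in all applications is a harmless extra precaution.
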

\begin{proof}
Using the product expansion for the Siegel functions in \eqref{Sprod}, we find that
\begin{align*}
g(m) &= \prod_{s=0}^{\ell-1}-q^{\frac{1}{2}\B(\frac{m}{\ell})}e\left(\frac{s}{2\ell}\left(\frac{m}{\ell}-1\right)\right) \prod_{n=1}^\infty\left(1-q^{n-1+\frac{m}{\ell}}e\left(\frac{s}{\ell}\right)\right)
\left(1-q^{n-\frac{m}{\ell}}e\left(-\frac{s}{\ell}\right)\right).
\end{align*}
Since
\[x^{\ell} - 1 = \prod_{s=0}^{\ell-1}\left(x-e\left(\frac{s}{\ell}\right)\right) ,\]
we see that the elementary symmetric functions in the $\ell$th roots of unity all vanish except the last, so for any $y$ we have
\[\prod_{s=0}^{\ell-1} \left(x - y e \left(\frac{s}{\ell}\right)\right) = x^{\ell} - y^{\ell}.\]
Applying this above, we see
\begin{align*}
g(m) &= -q^{\frac{\ell}{2}\B(\frac{m}{\ell})} e\left(\frac{(\ell-1)(m-\ell)}{4\ell} \right) \prod_{n=1}^\infty \left(1-q^{\ell(n-1+\frac{m}{\ell})}\right)\left(1 - q^{\ell(n-\frac{m}{\ell})}\right) \\
&= e\left(\frac{k(m-\ell)}{2\ell}\right)g_{(m/\ell,0)}(\ell \tau).
\end{align*}
Putting this together with equations \eqref{rs} and \eqref{defPsi} results in the desired expressions for $r_{\ell,k+1-m}(\tau)$ and $s_{\ell,m}(\tau)$ respectively.
\end{proof}

We can now prove Theorems \ref{au} and \ref{pu}.

\begin{proof}[Proof of Theorem \ref{pu}]
By \eqref{seq}, we can express $\s_{\ell,m}(\tau)$ in terms of pure Siegel functions as
\[s_{\ell,m}(\tau) = e\left(-\frac{km(m+1)}{4\ell}\right)\prod_{j=1}^m \prod_{s=0}^{\ell-1} \frac{g_{(2j/\ell, s/\ell)}(\tau)}{g_{(j/\ell,s/\ell)}(\tau)}.\]
We then compute
\[\sum_{j=1}^{m} \sum_{s=0}^{\ell-1} (2j - j) = \frac{\ell m(m+1)}{2} \equiv 0 \pmod \ell\]
\[\sum_{j=1}^m \sum_{s=0}^{\ell-1} (s - s) = 0\]
\[\sum_{j=1}^m\sum_{s=0}^{\ell-1} (2js - js) = \frac{m(m+1)}{2} \frac{\ell(\ell-1)}{2} \equiv 0 \pmod \ell\]
and the sum of the multiplicities is zero. Thus by Theorem \ref{puhammer}, the $\s_{\ell,m}(\tau)$ are modular units of level $\ell$.
\end{proof}

\begin{proof}[Proof of Theorem \ref{au}]
When $\text{gcd}(m, \ell) = 1$, Theorem \ref{auhammer} together with \eqref{r1} tells us that $r_{\ell, k+1-m}(\tau)$ is an algebraic integer unit. If $\ell$ is prime, then the functions $r_{\ell,m}(\tau)$ generate $U_\ell^+$ (Theorem 1.iii of \cite{F1}) and all of the $r_{\ell, m}(\tau)$ will be algebraic integer units so any product of them is as well.
\end{proof}

\section{The rank of $U_\ell^R$} \label{ranksec}

To prove Theorem \ref{rank} we will study the divisors of our functions on the modular curve $X(\ell)$. Theorem \ref{cy} will then follow using the limiting values computed by Folsom in \cite{F2}.

In this section, we fix $\ell = 2k+1 \geq 5$ prime. We will need the following congruence subgroups of $\text{SL}_2(\zz)$, defined by
\begin{align*}
\Gamma_1(\ell) &:= \left\{\left(\begin{array}{cc} a & b \\ c & d \end{array}\right) \in \text{SL}_2(\zz) : 
\left(\begin{array}{cc} a & b \\ c & d \end{array}\right) \equiv \left(\begin{array}{cc} 1 & 0 \\ * & 1 \end{array}\right) \pmod \ell
 \right\} \\
\Gamma_0(\ell) &:=\left\{\left(\begin{array}{cc} a & b \\ c & d \end{array}\right) \in \text{SL}_2(\zz) : 
\left(\begin{array}{cc} a & b \\ c & d \end{array}\right) \equiv \left(\begin{array}{cc} * & * \\ 0 & * \end{array}\right) \pmod \ell
 \right\}.
\end{align*}
The \textit{modular curves} $X(\ell), X_1(\ell),$ and $X_0(\ell)$ are defined to be the quotient of the extended upper-half plane $\mathbb{H}^* = \mathbb{H} \cup \mathbb{Q} \cup \{\infty\}$ by the action of the corresponding congruence subgroup. They are all compact Riemann surfaces.

Let $\pi: X(\ell) \rightarrow X_0(\ell)$ be the canonical projection and let $\mathcal{A}_\ell = \pi^{-1}(\infty)$. Folsom proves that the functions $g(m)$ all have the same order of vanishing at cusps not in $\mathcal{A}_\ell$. More precisely, if $\text{ord}_\beta f$ is the smallest power of $q^{1/\ell}$ appearing in the Fourier expansion of $f$ at the cusp $\beta$, we have the following.
\begin{Lemma}[Folsom, Proposition 5 of \cite{F1}] \label{Fprop5}
Let $\ell \geq 5$ be prime. Then for cusps $\beta, \beta'$ not in $\mathcal{A}_\ell$ and any integers $m, n$ we have
\[\text{ord}_\beta~g(m) = \text{ord}_{\beta'}~g(n).\]
\end{Lemma}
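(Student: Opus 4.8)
The plan is to compute $\text{ord}_\beta\, g(m)$ directly from the transformation theory of Siegel functions and to show that, for $\beta \notin \mathcal{A}_\ell$, the resulting expression collapses to a quantity independent of both the cusp $\beta$ and the integer $m$. The leading term of the $q$-expansion \eqref{Sprod} gives $\text{ord}_\infty g_a = \tfrac12\B(a_1)$, measured in powers of $q$. To read off the order at an arbitrary cusp $\beta$, I would write $\beta = \gamma\infty$ with $\gamma = \left(\begin{smallmatrix} A & B \\ C & D\end{smallmatrix}\right) \in \text{SL}_2(\zz)$ and invoke the transformation property \eqref{k-trans} of the Klein function $\mathfrak{k}_a = g_a/\eta^2$, which expresses $g_a(\gamma\tau)$ as $g_{a\gamma}(\tau)$ times a factor that is holomorphic and nonvanishing at the cusp. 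Since every cusp of $X(\ell)$ has width $\ell$, orders are measured in the local parameter $q^{1/\ell}$, and one obtains $\text{ord}_\beta g_a = \tfrac{\ell}{2}\B\big((a\gamma)_1\big)$, where $(a\gamma)_1$ denotes the first coordinate of the row vector $a\gamma$ taken modulo $1$.

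Applying this to $g(m) = \prod_{s=0}^{\ell-1} g_{(m/\ell,\,s/\ell)}(\tau)$ and noting that the first coordinate of $(m/\ell,\, s/\ell)\gamma$ equals $(mA + sC)/\ell$, I obtain
\[\text{ord}_\beta\, g(m) = \frac{\ell}{2}\sum_{s=0}^{\ell-1}\B\!\left(\left\{\frac{mA+sC}{\ell}\right\}\right).\]
The decisive point is the meaning of the hypothesis $\beta \notin \mathcal{A}_\ell$. Since $\mathcal{A}_\ell = \pi^{-1}(\infty)$ consists precisely of the cusps of $X(\ell)$ lying over the cusp $\infty$ of $X_0(\ell)$, namely those whose representative satisfies $C \equiv 0 \pmod \ell$, the condition $\beta \notin \mathcal{A}_\ell$ is equivalent to $C \not\equiv 0 \pmod \ell$. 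As $\ell$ is prime, $C$ is then invertible modulo $\ell$, so as $s$ ranges over $0, 1, \ldots, \ell-1$ the quantities $mA + sC$ run through a complete residue system modulo $\ell$. The sum therefore reduces to $\sum_{t=0}^{\ell-1}\B(t/\ell)$, a constant depending only on $\ell$ and, crucially, independent of both $m$ and $\beta$.

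It then follows at once that $\text{ord}_\beta\, g(m) = \tfrac{\ell}{2}\sum_{t=0}^{\ell-1}\B(t/\ell) = \text{ord}_{\beta'}\, g(n)$ for all cusps $\beta, \beta' \notin \mathcal{A}_\ell$ and all integers $m, n$, which is the claim. I expect the main obstacle to be purely bookkeeping: fixing the normalization in $\text{ord}_\beta g_a = \tfrac{\ell}{2}\B\big((a\gamma)_1\big)$ — that is, confirming the cusp width is $\ell$ so orders are read in $q^{1/\ell}$, and checking that the prefactor relating $g_a(\gamma\tau)$ to $g_{a\gamma}(\tau)$, which involves the automorphy factor and the $\eta^2$ in $\mathfrak{k}_a$, is holomorphic and nonzero at the cusp and so does not alter the order. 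Once this normalization is settled, the heart of the argument is the elementary fact that $s \mapsto mA + sC$ permutes $\zz/\ell\zz$ when $C \not\equiv 0$, together with the translation-invariance of the full sum $\sum_t \B(t/\ell)$.
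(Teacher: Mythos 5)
Your proposal is correct and is essentially the argument behind this statement: the paper itself does not reprove it but cites Folsom's Proposition 5 of \cite{F1}, and your computation is exactly the one the paper runs elsewhere --- the identity $\ord_{\gamma\infty} g_a = \tfrac{\ell}{2}\B((a\gamma)_1)$ underlies Lemma \ref{div}, and the key step that $s \mapsto mA+sC$ permutes $\zz/\ell\zz$ when $C \not\equiv 0 \pmod \ell$, forcing the sum to collapse to the constant $\tfrac{\ell}{2}\sum_t \B(t/\ell)$, is precisely the device used in the proof of Lemma \ref{cmkey}. Your identification of $\beta \notin \mathcal{A}_\ell$ with $C \not\equiv 0 \pmod \ell$ and your handling of the automorphy factor and the $q^{1/\ell}$ normalization are all sound.
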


\begin{remark}
Although Folsom states this proposition for $1 \leq m, n \leq k$, the proof is valid for any integers $m$ and $n$.
\end{remark}

Let $U_\ell^+$ be the subgroup of $U_\ell$ consisting of functions whose zeros and poles are supported on $\mathcal{A}_\ell$,
\[U_\ell^+ := \{f \in U_\ell : \text{supp}(f) \subseteq \mathcal{A}_\ell\}.\]
Lemma \ref{Fprop5} implies that any quotient of Siegel functions $\frac{g(m)}{g(n)}$ is in $U_\ell^{+}$.
The quotient of $U_\ell^+$ by the subgroup $\qq(\zeta_\ell)^\times$ of constant functions is a free abelian group of rank $|\mathcal{A}_\ell| - 1 = k-1$. Associating to each function $f$ the row vector corresponding to its divisor
\[V(f) := (\ldots, \text{ord}_\beta f, \ldots) \in \zz^k,\]
where $\beta$ runs over cusps of $\mathcal{A}_\ell$,
produces an isomorphism of $U_\ell^+/\qq(\zeta_\ell)^\times$ with the subgroup of vectors in the free abelian group on $\mathcal{A}_\ell$ whose entries sum to $0$.

To determine the rank of the groups $U_\ell^R$, we will need to find the order of vanishing of $g(m)$ at each of the cusps of $\mathcal{A}_\ell$. We can view the map $\pi:X(\ell) \rightarrow X_0(\ell)$ as the composition of maps $\pi_1: X(\ell) \rightarrow X_1(\ell)$ and $\pi_2: X_1(\ell) \rightarrow X_0(\ell)$. We may take as coset representatives for $\Gamma(\ell)$ in $\Gamma_1(\ell)$ the matrices $\left(\begin{array}{cc} 1 & b \\ 0 & 1 \end{array}\right)$ for $b = 0, \ldots, \ell-1$. Since these matrices all stabilize $\infty$, the map $\pi_1$ is one-to-one over $\infty$. Thus we may identify $\mathcal{A}_\ell$ with the cusps in $\pi_2^{-1}(\infty)$. Here, we may take as coset representatives for $\Gamma_1(\ell)$ in $\Gamma_0(\ell)$ the matrices
\[\gamma(i) := \left(\begin{array}{cc} i & b \\ \ell & i^{-1} \end{array}\right)\]
defined for $i \in (\zz/\ell\zz)^\times$. Accounting for the fact that $-1$ acts trivially on the upper-half plane, we see that the matrices $\gamma(i)$ for $i = 1, \ldots, k$ are in direct correspondence with cusps of $\mathcal{A}_\ell$ through associating $\gamma(i)$ with the cusp $\gamma(i)\infty$. We will write $\ord_if := \ord_{\gamma(i)\infty}f$ for the order of a function $f$ at the cusp $\gamma(i)\infty$. Let $V(m) = V(g(m))$ be the row vector $(\ord_1g(m), \ldots, \ord_kg(m))$. The following lemma gives an explicit expression for $V(m)$ and shows that it only depends on the class of $m$ in $(\zz/\ell\zz)^\times/\{\pm 1\}$.

\begin{Lemma} \label{div}
We have that
\[V(m) = \tfrac{\ell}{2}\left(\B\left(\tfrac{m}{\ell}\right), \B\left(\tfrac{2m}{\ell}\right), \ldots, \B\left(\tfrac{km}{\ell}\right)\right),\]
where as always the Bernoulli polynomial is evaluated at the fractional part of its argument. Therefore, $V(m) = V(-m)$.
\end{Lemma}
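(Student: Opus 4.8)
The plan is to read the entries of $V(m)$ straight off the $q$-expansions, exploiting the closed form $g(m) = e\!\left(\tfrac{k(m-\ell)}{2\ell}\right) g_{(m/\ell,0)}(\ell\tau)$ from the preceding lemma. Since $\ord_i g(m)$ is the leading exponent in the expansion of $g(m)(\gamma(i)\tau)$ at $\infty$, I would first transport the cusp $\gamma(i)\infty$ to $\infty$ by substituting $\tau \mapsto \gamma(i)\tau$ and then isolate the leading term there. The work is to keep the Siegel index and all prefactors under control through this substitution.

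Concretely, writing $\gamma(i) = \left(\begin{smallmatrix} i & b \\ \ell & i^{-1}\end{smallmatrix}\right)$, a direct computation gives $\ell\,\gamma(i)\tau = \delta(\ell\tau)$ with $\delta := \left(\begin{smallmatrix} i & b\ell \\ 1 & i^{-1}\end{smallmatrix}\right)$, and $\delta \in \mathrm{SL}_2(\zz)$ because $\det\delta = \det\gamma(i) = 1$. Combining the Klein transformation \eqref{k-trans} with the transformation law of $\eta$ yields $g_a(\delta\sigma) = \varepsilon\, g_{a\delta}(\sigma)$ for a root of unity $\varepsilon$; applying this with $\sigma = \ell\tau$ and $a = (m/\ell,0)$, and noting $a\delta = (mi/\ell,\,mb) \equiv (\{mi/\ell\},0) \pmod{\zz^2}$ --- where shifting a Siegel index by $\zz^2$ again only multiplies $g$ by a root of unity --- I obtain $g(m)(\gamma(i)\tau) = (\text{nonzero constant})\cdot g_{(\{mi/\ell\},0)}(\ell\tau)$. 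Reading \eqref{Sprod} with $\tau \mapsto \ell\tau$ shows the leading exponent of the right-hand side is $\tfrac{\ell}{2}\B(mi/\ell)$, so $\ord_i g(m) = \tfrac{\ell}{2}\B(mi/\ell)$, and assembling these for $i = 1,\dots,k$ gives the asserted $V(m)$. As a cross-check, one can instead keep the pure product $g(m) = \prod_s g_{(m/\ell,s/\ell)}$: each index $(m/\ell,s/\ell)\gamma(i)$ has first coordinate $\{mi/\ell\}$, so all $\ell$ factors contribute $\tfrac12\B(mi/\ell)$ to the $q$-order, again totaling $\tfrac{\ell}{2}\B(mi/\ell)$.

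The final assertion $V(m) = V(-m)$ is then immediate from the evenness $\B(-x) = \B(x)$ for $x \notin \zz$ (since $\{-x\} = 1 - \{x\}$), which leaves each entry $\tfrac{\ell}{2}\B(mi/\ell)$ fixed under $m \mapsto -m$; the same evenness shows $V(m)$ depends only on the class of $m$ in $(\zz/\ell\zz)^\times/\{\pm 1\}$. I expect the main obstacle to be bookkeeping rather than conceptual: one must confirm that every prefactor collected from the Klein$/\eta$ transformation and from reducing the index modulo $\zz^2$ is a nonzero constant, hence invisible to $\ord$, and one must fix the normalization of the local uniformizer at $\gamma(i)\infty$ so that the recorded leading exponent is exactly $\tfrac{\ell}{2}\B(mi/\ell)$ rather than a rescaling of it.
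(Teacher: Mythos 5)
Your proposal is correct, and its ``cross-check'' paragraph --- expanding $g(m)=\prod_{s}g_{(m/\ell,s/\ell)}$, hitting each factor with $\gamma(i)$ via \eqref{k-trans}, and reading off $\tfrac12\B(im/\ell)$ from the first coordinate of $(m/\ell,s/\ell)\gamma(i)$ --- is exactly the paper's proof, with the symmetry $V(m)=V(-m)$ likewise coming from $\B(x)=\B(1-x)$. Your primary route through the closed form $g(m)=e\bigl(\tfrac{k(m-\ell)}{2\ell}\bigr)g_{(m/\ell,0)}(\ell\tau)$ and the conjugated matrix $\delta$ is only a cosmetic variant of the same computation, so no further comparison is needed.
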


\begin{proof}
Looking at \eqref{Sprod}, we see that $\ord_\infty g_{(m/\ell,*)}(\tau) = \frac{1}{2}\B(\frac{m}{\ell})$.
For any $i = 1, \ldots, k$, we now show
\begin{align*}
\ord_i g(m) &= \sum_{s=0}^{\ell-1}\ord_i  g_{(m/\ell, s/\ell)}(\tau) = \sum_{s=0}^{\ell-1}\ord_\infty g_{(m/\ell,s/\ell)}(\gamma(i)\tau) \\
&= \sum_{s=0}^{\ell-1} \ord_\infty g_{(m/\ell, s/\ell)\gamma(i)}(\tau)=\sum_{s=0}^{\ell-1}g_{(im/\ell+s,*)}(\tau) =\tfrac{\ell}{2}\B(\tfrac{im}{\ell}).
\end{align*}
The second statement follows from the fact that $\B(x) = \B(1-x)$.
\end{proof}

In Proposition 6 of \cite{F1}, Folsom uses the Frobenius determinant relation and the non-vanishing of generalized Bernoulli numbers to show that the vectors $V(m)$ for $m = 1, \ldots, k$ are linearly independent. This allows us to prove the following formula.

\begin{Lemma} \label{rkUc}
Let $\ell \geq 5$ be prime. For any $c \not\equiv 0 \pmod \ell$, the rank of the $k \times k$ matrix with rows $V(cm) - V(m)$ for $m=1,\ldots, k$ is equal to $k - \frac{k}{d}$ where $d$ is the order of $c$ in the group $(\zz/\ell\zz)^\times/\{\pm 1\}$.
\end{Lemma}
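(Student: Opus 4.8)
The plan is to reinterpret the matrix with rows $V(cm) - V(m)$ through the action of $c$ on the index set and to exploit the linear independence of the full system $\{V(m)\}_{m=1}^k$ established by Folsom (Proposition 6 of \cite{F1}). First I would consider the free abelian group $\zz^k$ with basis vectors indexed by $m = 1, \ldots, k$, which we identify with representatives of $(\zz/\ell\zz)^\times/\{\pm 1\}$. By Lemma \ref{div} we have $V(m) = V(-m)$, so $V$ is well-defined as a function on $(\zz/\ell\zz)^\times/\{\pm 1\}$, and multiplication by $c$ induces a permutation $\sigma$ of this set of size $k$. The key observation is that the map sending the $m$-th basis vector to $V(cm) - V(m) = V(\sigma(m)) - V(m)$ factors as $L \circ (\sigma - I)$, where $\sigma - I$ is the endomorphism of $\zz^k$ sending the $m$-th basis vector to $e_{\sigma(m)} - e_m$, and $L : \zz^k \to \zz^k$ is the linear map sending $e_m \mapsto V(m)$. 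Since the $V(m)$ are linearly independent, $L$ is injective over $\qq$, so the rank of the composite equals the rank of $\sigma - I$ over $\qq$.

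Next I would compute $\mathrm{rank}_\qq(\sigma - I)$ purely combinatorially. The permutation $\sigma$ is multiplication by $c$ on the cyclic-coset structure of $(\zz/\ell\zz)^\times/\{\pm 1\}$, and since $c$ has order $d$ in this group, every orbit of $\sigma$ has length exactly $d$; hence $\sigma$ consists of $k/d$ disjoint $d$-cycles. For a single $d$-cycle, the permutation matrix restricted to that orbit has eigenvalue $1$ with multiplicity one (the all-ones vector is fixed), so $\sigma - I$ drops rank by exactly one per cycle. Summing over the $k/d$ cycles gives $\mathrm{rank}_\qq(\sigma - I) = k - k/d$. Composing with the injective $L$ preserves this rank, yielding the claimed value.

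The main obstacle I anticipate is the bookkeeping needed to justify that multiplication by $c$ partitions $\{1,\ldots,k\}$ into orbits all of size exactly $d$: the representatives $1, \ldots, k$ are chosen from the quotient by $\pm 1$, so I must verify that multiplication by $c$ descends to a genuine permutation of these $k$ representatives and that its cycle type is governed by the order of $c$ in $(\zz/\ell\zz)^\times/\{\pm 1\}$ rather than in $(\zz/\ell\zz)^\times$. This is where the hypothesis that $\ell$ is prime and the reduction to the quotient group $\{\pm 1\}$ both enter: it guarantees $(\zz/\ell\zz)^\times/\{\pm 1\}$ is a well-defined group of order $k$ acting freely on itself by translation, so all orbits of the translation-by-$c$ map share the common length $d = \ord(c)$. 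Once this orbit structure is pinned down, the rank count for $\sigma - I$ is standard, and the passage through $L$ is immediate from the independence of the $V(m)$.
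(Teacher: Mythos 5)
Your proposal is correct and is essentially the paper's own argument: the paper likewise factors the matrix as $(C-I)A$ with $A$ the (full-rank, by Folsom's Proposition~6) matrix of rows $V(m)$ and $C$ the permutation matrix for multiplication by $c$ on $(\zz/\ell\zz)^\times/\{\pm 1\}$, then computes $\mathrm{Rank}(C-I)=k-\frac{k}{d}$ from the cycle structure. Your explicit verification that all orbits have length exactly $d$ (because the quotient group acts freely on itself) is a point the paper leaves implicit, but the route is the same.
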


\begin{proof}
By Lemma \ref{div}, the collection $\{V(cm) : 1 \leq m \leq k\}$ is a permutation of $\{V(m) : 1 \leq m \leq k\}$. Let $A$ be the matrix formed by the rows $V(m)$ for $1 \leq m \leq k$ and let $C$ be the permutation matrix for multiplication by $c$, i.e. $C=\rho(c)$ where $\rho:(\zz/\ell \zz)^\times \rightarrow \text{GL}_k(\mathbb{C})$ is the regular representation. Then the matrix we are considering is $(C-I)A$.
 Since $\text{Rank}(A)=k$, we have $\text{Rank}((C-I)A) =\text{Rank}(C - I)$. The kernel of $C-I$ is the eigenspace of $C$ for eigenvalue $1$ which has dimension equal to the number of cycles in $C$ which is $\frac{k}{d}$. Therefore, $\text{Rank}((C-I)A) = k-\frac{k}{d}$.
\end{proof}

To complete the proof that $U_\ell^C \times \qq(\zeta_\ell)^\times = U_\ell^+$, Folsom shows that $U_\ell^C \times \qq(\zeta_\ell)$ and $U_\ell^+$ are both cotorsion free in $U_\ell$ (Proposition 3 of \cite{F1}). In general, an identical argument shows the following.

\begin{Lemma} \label{co}
Suppose we have a collection of modular units
\[h_i(\tau) = \prod_{m=1}^{\ell-1} g(m)^{\epsilon_i(m)}\]
with $\epsilon_i(m) = 0, \pm 1$ and $\sum_m \epsilon_i(m) = 0$. Then the subgroup of $U_\ell$ generated by $\qq(\zeta_\ell)^\times$ and the $h_i(\tau)$ is cotorsion free. In particular, the group $U_\ell^R \times \qq(\zeta_\ell)^\times$ is cotorsion free.
\end{Lemma}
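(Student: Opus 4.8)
The plan is to establish cotorsion freeness in the literal sense that $U_\ell/H$ is torsion free, where $H := \langle \qq(\zeta_\ell)^\times, h_i\rangle$; equivalently, that $f \in U_\ell$ with $f^n \in H$ for some $n \geq 1$ forces $f \in H$. I would organize this as a ``support'' reduction that places $f$ in $U_\ell^+$, followed by a lattice computation carried out through the divisor map $V$, so that everything comes down to a single saturation statement about the exponent vectors of the $h_i$.

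For the support step, observe that because $\sum_m \epsilon_i(m) = 0$ and the $\epsilon_i(m)$ lie in $\{0,\pm 1\}$, each $h_i$ is a product of quotients $g(m)/g(n)$, hence lies in $U_\ell^+$ by Lemma \ref{Fprop5}; thus $H \subseteq U_\ell^+$. If $f^n \in H$, then $\mathrm{div}(f^n) = n\,\mathrm{div}(f)$ is supported on $\mathcal{A}_\ell$, and since orders at cusps are additive and $n \neq 0$, the divisor of $f$ is itself supported on $\mathcal{A}_\ell$, so $f \in U_\ell^+$. This already records that $U_\ell^+$ is cotorsion free and reduces the problem to the free abelian group $\bar U := U_\ell^+/\qq(\zeta_\ell)^\times$, on which $V$ is an isomorphism onto the lattice $M$ of sum-zero divisor vectors. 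Here I would invoke Folsom's identity $U_\ell^C \times \qq(\zeta_\ell)^\times = U_\ell^+$ to record $M = \langle V(m) - V(1)\rangle = \zz^k_0 A$, where $A$ is the rationally invertible matrix (by Proposition~6 of \cite{F1}) with rows $V(1),\dots,V(k)$ and $\zz^k_0$ is the sum-zero sublattice of $\zz^k$.

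Next I would translate the goal into lattices. Folding the contributions of $g(m)$ and $g(\ell-m)$, which share a divisor by Lemma \ref{div}, gives $V(h_i) = \epsilon_i A$ for some $\epsilon_i \in \zz^k_0$, so the isomorphism $v \mapsto vA$ identifies $M/\langle V(h_i)\rangle$ with $\zz^k_0/\langle \epsilon_i\rangle$. Cotorsion freeness of $H$ thus amounts to torsion freeness of this last quotient, i.e. to $\langle \epsilon_i\rangle$ being saturated in $\zz^k_0$. For the distinguished family $H = U_\ell^R \times \qq(\zeta_\ell)^\times$ I would use \eqref{seq} to write $V(s_{\ell,m}) = \sum_{j=1}^m \bigl(V(2j) - V(j)\bigr)$; since the full sum over $j=1,\dots,k$ vanishes, these partial sums generate the same lattice as the individual rows $V(2j)-V(j)$, and hence $\langle \epsilon_i\rangle = \mathrm{im}(C-I)$, where $C = \rho(2)$ is the permutation matrix of multiplication by $2$ from Lemma \ref{rkUc}.

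The main obstacle is exactly this saturation, which the support step does not touch; it is resolved for $U_\ell^R$ by the cycle structure of $C$. As $C$ is a product of $k/d$ disjoint $d$-cycles, the restriction of $C-I$ to each orbit block is a cyclic shift minus the identity, whose image is precisely the sum-zero sublattice of that block; therefore $\mathrm{im}(C-I) = \bigoplus_{\text{orbits } O} \zz^O_0$. Sending $v \in \zz^k_0$ to its tuple of orbit-sums then identifies $\zz^k_0/\mathrm{im}(C-I)$ with the sum-zero lattice on the set of orbits, which is free of rank $\tfrac{k}{d}-1$; in particular it is torsion free, so $U_\ell^R \times \qq(\zeta_\ell)^\times$ is cotorsion free, and this computation even recovers the rank in Theorem \ref{rank}. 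The general statement follows by the same template, the only point requiring verification being that the exponent lattice $\langle \epsilon_i\rangle$ of a family of genuine modular units of the prescribed shape is saturated in $\zz^k_0$; it is precisely this saturation, made transparent by the permutation picture above, rather than the routine support reduction, that carries the content of the lemma.
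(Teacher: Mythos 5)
The paper does not actually supply an argument for this lemma: it asserts that Folsom's proof of Proposition 3 of \cite{F1} (which treats $U_\ell^C \times \qq(\zeta_\ell)^\times$ and $U_\ell^+$) applies verbatim. Your proof is therefore necessarily a different route, and for the clause the paper actually uses later --- that $U_\ell^R \times \qq(\zeta_\ell)^\times$ is cotorsion free --- it is correct and complete. The support reduction places any $f$ with $f^n \in H$ inside $U_\ell^+$; the divisor map transports the problem to the sum-zero lattice, where Folsom's identity $U_\ell^C \times \qq(\zeta_\ell)^\times = U_\ell^+$ and the rational invertibility of $A$ identify $U_\ell^+/H$ with $\zz^k_0/\mathrm{im}(C-I)$; and the observation that $C-I$ restricted to each orbit of the multiplication-by-$2$ permutation has image the full sum-zero sublattice of that orbit shows this quotient is free of rank $\tfrac{k}{d}-1$. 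This is clean, it recovers the rank formula of Theorem \ref{rank} as a byproduct, and it correctly isolates saturation of the exponent lattice as the real content of the statement.

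Your proof of the general clause, however, is incomplete, and you acknowledge as much: you say it ``follows by the same template, the only point requiring verification being that the exponent lattice is saturated,'' but that verification is never given --- and it cannot be, because the stated hypotheses do not imply it. Take $h_1 = g(1)\,g(\ell-1)\,g(2)^{-1}g(\ell-2)^{-1}$: the exponents lie in $\{0,\pm 1\}$ and sum to zero, and $h_1$ is a modular unit; but since $g(\ell-m) = -g(m)$ one has $h_1 = \bigl(g(1)/g(2)\bigr)^2$, so the folded exponent vector is $(2,-2,0,\dots,0)$, every element of $\langle \qq(\zeta_\ell)^\times, h_1\rangle$ has divisor an even multiple of $V(1)-V(2)$, and $g(1)/g(2) \in U_\ell$ produces $2$-torsion in the quotient. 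So the general clause, read literally for an arbitrary collection with unfolded exponents in $\{0,\pm1\}$, is false; the correct hypothesis is exactly the one your reduction surfaces, namely that the folded exponent vectors generate a saturated sublattice of the sum-zero lattice. Since only the $U_\ell^R$ clause is invoked in the proof of Theorem \ref{rank}, your argument suffices for the paper's purposes, but the final sentence of your proposal should not claim the general statement; it should either impose the saturation hypothesis explicitly or be dropped.
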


Theorem \ref{rank} now follows from Lemmas \ref{rkUc} and \ref{co}.

\begin{proof}[Proof of Theorem \ref{rank}]
By \eqref{seq} we have
\[V(s_{\ell,m}(\tau)) = \sum_{j=1}^m V(2j) - V(j).\]
The rank of the subgroup generated by the functions $s_{\ell,m}(\tau)$ is equal to the rank of the $k \times k$ matrix with rows $V(s_{\ell,m}(\tau))$, which is equal to the rank of the $k\times k$ matrix with rows $V(2m) - V(m)$. Applying Lemma \ref{rkUc} shows
\[\mathrm{Rank}(U_\ell^R) = k - \frac{k}{d},\]
where $d$ is the order of $2$ in $(\zz/\ell\zz)^\times/\{\pm 1\}$. In the case when $2$ is a generator, we have $d=k$ so 
\[\mathrm{Rank}(U_\ell^R) = k-1 = \text{Rank}(U_\ell^+/\qq(\zeta_\ell)^\times).\]
Thus, by Lemma \ref{co} we have
\[U_\ell^R \times \qq(\zeta_\ell)^\times= U_\ell^+. \qedhere\]
\end{proof}

We now discuss the parallel result for the specializations of these functions in the cusp $0$. Let $E_{\qq(\zeta_\ell)^+}^C$ be the cyclotomic unit group of $\qq(\zeta_\ell)^+$, the maximal real subfield of $\qq(\zeta_\ell)$. It is a standard result (see Lemma 8.1 of \cite{W}) that
\[E_{\qq(\zeta_\ell)^+}^C = \left\langle -1, \zeta_\ell^{\frac{1-m}{2}} \frac{1-\zeta_\ell^m}{1-\zeta_\ell} : 1 < m < \tfrac{\ell}{2} \right\rangle. \]
In \cite{F2} (4.15), Folsom shows that
\[\lim_{\tau \rightarrow 0} r_{\ell,m}(\tau) = (-1)^{k-m} \zeta_\ell^{\frac{1-v}{2}}\frac{\zeta_\ell^v-1}{\zeta_\ell-1},\]
where $v = k+1 - m$, and thus together with $-1$, these limiting values generate $E_{\qq(\zeta_\ell)^+}^C$. In fact, since the $r_{\ell,m}(\tau)$ are independent, the map 
\[\lim: U_\ell^C\times \{\pm 1\}  \rightarrow E_{\qq(\zeta_\ell)^+}^C\]
determined by sending each function to its limiting value at $0$ is an isomorphism of groups. Theorem \ref{cy} now follows.

\begin{proof}[Proof of Theorem \ref{cy}]
By \eqref{defPsi} and \eqref{r1}, we have
\begin{align*}
s_{\ell,m}(\tau) = \prod_{j=1}^{m} (-1)^{j}\frac{r_{\ell,k+1-2j}(\tau)}{r_{\ell,k+1-j}(\tau)}.
\end{align*}
and so
\[\lim_{\tau\rightarrow 0} s_{\ell,m}(\tau) = 
\prod_{j=1}^m(-1)^j \frac{(-1)^{2j-1}\zeta_\ell^{\frac{1-2j}{2}}\frac{\zeta_\ell^{2j}-1}{\zeta_\ell-1}}{(-1)^{j-1}\zeta_\ell^{\frac{1-j}{2}}\frac{\zeta_\ell^j-1}{\zeta_\ell-1}}
=\prod_{j=1}^m \zeta_{\ell}^{-\frac{j}{2}}\frac{\zeta_\ell^{2j} - 1}{\zeta_\ell^j - 1}.
\]
Since the map $\lim:U_\ell^C\times \{\pm 1\} \rightarrow E_{\qq(\zeta_\ell)^+}^C$ described above is an isomorphism, the image of $U_\ell^R$ has the same rank.
\end{proof}

\section{Singular values of $s_{\ell,m}(\tau)$}

In this section, we prove Theorem \ref{cm}. We begin by recalling the Galois action of the field extension $\mathcal{F}_\ell/\mathcal{F}_1$ and in particular the action on Siegel functions. We then determine the subgroup of $\text{Gal}(\mathcal{F}_\ell/\mathcal{F}_1)$ fixing the function $s_{\ell,m}(\tau)$. This will show that the functions $s_{\ell, m}(\tau)$ and $r_{\ell,m}(\tau)$ generate the same extensions of the modular function field, which turns out to be those functions of level $\ell$ with Fourier expansion defined over $\qq$. After this point, the methods of Folsom's proof that the singular values of $r_{\ell,m}(\tau)$ generate ray class fields apply to show that the same is true for the singular values of $s_{\ell,m}(\tau)$.

\subsection{Galois action on Siegel functions}
The field $\mathcal{F}_\ell$ is a Galois extension of $\mathcal{F}_1$ with Galois group isomorphic to $\mathrm{GL}_2(\zz/\ell\zz)/\{\pm I\}$.
The natural action of $\mathrm{SL}_2(\zz/\ell\zz)$ on $\mathcal{F}_\ell$ is given by
\[\gamma \cdot f(\tau) := f(\gamma\tau). \]
Meanwhile, for $\gamma_d := \left(\begin{array}{cc} 1 & 0 \\ 0 & d\end{array}\right)$ with $d \in (\zz/\ell\zz)^\times$ we let
\[\gamma_d \cdot \zeta_\ell = \zeta_\ell^d\]
and extend this to $\mathcal{F}_\ell$ by acting on Fourier coefficients. That is, if $f \in \mathcal{F}_\ell$ has Fourier expansion $f(\tau) = \sum_{n=m}^\infty a_nq^{n/\ell}$, then we define
\[\gamma_d\cdot f(\tau) := \sum_{n=m}^\infty(\gamma_d\cdot a_n)q^{n/\ell}. \]
Since the matrices $\gamma_d$ together with $\mathrm{SL}_2(\zz/\ell\zz)$ generate $\text{GL}_2(\zz/\ell\zz)$, this determines the Galois action. From \eqref{Sprod} and \eqref{k-trans}, one may confirm that the Galois group acts on Siegel functions by multiplication on indices. That is, for $\gamma \in \mathrm{GL}_2(\zz/\ell\zz)$ we have
\begin{equation} \label{act}
\gamma\cdot g_{a}(\tau) = g_{a\gamma}(\tau).
\end{equation}

\subsection{Proof of Theorem \ref{cm}}
Folsom's proof that the singular values of $r_{\ell,m}(\tau)$ generate the ray class fields over the Hilbert class fields depends only on the fact that these functions are generators of a specific extension of $\mathcal{F}_1$, namely the fixed field of the elements $\gamma_d$. To establish Theorem \ref{cy}, it thus suffices to prove the following.

\begin{Lemma} \label{cmkey}
For $1 \leq m \leq k-1$, we have that $\mathcal{F}_1(s_{\ell,m}(\tau))$ is the fixed field of the subgroup $\{\gamma_d: d \in (\zz/\ell\zz)^\times\}$.
\end{Lemma}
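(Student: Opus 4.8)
\emph{Overall strategy.} I would determine the subgroup $\mathrm{Stab}(s_{\ell,m})\subseteq G:=\mathrm{Gal}(\mathcal{F}_\ell/\mathcal{F}_1)\cong\mathrm{GL}_2(\zz/\ell\zz)/\{\pm I\}$ fixing $s_{\ell,m}$ and show it equals $D:=\{\gamma_d:d\in(\zz/\ell\zz)^\times\}$; by the Galois correspondence this gives $\mathcal{F}_1(s_{\ell,m})=\mathcal{F}_\ell^{D}$, the fixed field of the $\gamma_d$. The inclusion $D\subseteq\mathrm{Stab}(s_{\ell,m})$ is immediate: by \eqref{defPsi}, $s_{\ell,m}$ is a quotient of the series $\Phi_{1a},\Phi_{1b}$, whose Fourier coefficients are rational, so $s_{\ell,m}$ has $q$-expansion over $\qq$ and is fixed by each $\gamma_d$, since $\gamma_d$ acts on Fourier coefficients only, through $\zeta_\ell\mapsto\zeta_\ell^{d}$. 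Hence $\mathcal{F}_1(s_{\ell,m})\subseteq\mathcal{F}_\ell^{D}$.

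\emph{Reduction to $\mathrm{SL}_2$ and to a combinatorial condition.} For the reverse inclusion, note that $\det$ restricts to a bijection $D\to(\zz/\ell\zz)^\times$ and that $\mathrm{SL}_2(\zz/\ell\zz)=\ker(\det)$. Thus any $\gamma\in\mathrm{Stab}(s_{\ell,m})$ factors as $\gamma=\gamma_d\sigma$ with $\sigma\in\mathrm{SL}_2(\zz/\ell\zz)$, and since $\gamma_d\in\mathrm{Stab}(s_{\ell,m})$ we get $\sigma\in\mathrm{Stab}(s_{\ell,m})$; so $\mathrm{Stab}(s_{\ell,m})=D$ will follow once the only $\sigma\in\mathrm{SL}_2(\zz/\ell\zz)$ fixing $s_{\ell,m}$ are $\pm I$. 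Such a $\sigma$ fixes the divisor of $s_{\ell,m}$, which by Lemmas \ref{Fprop5} and \ref{div} is supported on $\mathcal{A}_\ell$; identifying cusps with primitive vectors in $(\zz/\ell\zz)^2/\{\pm1\}$, the cusps of $\mathcal{A}_\ell$ are the vectors $(i,0)$. This support is nonempty (the cusp $i=1$ occurs, as $1\notin 2B$ where $B=\{1,\dots,m\}$), so $\sigma$ sends one axis vector $(i,0)$ to another, forcing $\sigma$ upper triangular with diagonal entry $p$. Then \eqref{act} and the telescoping computation behind \eqref{seq} give that $\sigma\cdot g(n)$ equals $g(np)$ up to a root of unity, so via \eqref{seq}, Lemma \ref{div}, and the independence of the $V(n)$ (Proposition 6 of \cite{F1}), the divisor condition becomes: multiplication by $p$ fixes the integer vector $c_t=\#\{j\le m:2j\equiv\pm t\}-\#\{j\le m:j\equiv\pm t\}=\mathbf{1}_{2B}(t)-\mathbf{1}_{B}(t)$ on $(\zz/\ell\zz)^\times/\{\pm 1\}$.

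\emph{The two remaining steps, and the main obstacle.} Once $p\equiv\pm1$, the matrix $\sigma$ is, up to sign, unipotent $\sigma_q=\left(\begin{smallmatrix}1&q\\0&1\end{smallmatrix}\right)$, and a direct computation from \eqref{Sprod} of the leading root of unity in $\sigma_q\cdot g(n)$ shows that $\sigma_q$ multiplies $s_{\ell,m}$ by $e\!\left(-\tfrac{q\,m(m+1)(k-m)}{2\ell}\right)$; since $m(m+1)(k-m)$ is coprime to $\ell$, this is $1$ only when $q\equiv 0$, so the unipotent part is trivial. The crux is the remaining claim that invariance of $c$ under multiplication by $p$ forces $p\equiv\pm1$. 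I would pair the divisor relation with the even Dirichlet characters $\chi$ modulo $\ell$: using $V(n)_i=\tfrac{\ell}{2}\B(in/\ell)$ and reindexing, $\sum_i\chi(i)\,(\mathrm{div}\,s_{\ell,m})_i$ is a nonzero scalar multiple of $B_{2,\chi}\,(\chi(2)-1)\sum_{j=1}^m\chi(j)$, so invariance forces this to vanish whenever $\chi(p)\neq 1$. Because $B_{2,\chi}\neq 0$ for even $\chi$ — precisely the non-vanishing Folsom exploits — the relevant characters are the even $\chi$ with $\chi(2)\neq1$ and $\sum_{j\le m}\chi(j)\neq 0$, and it suffices to show these separate every $t\not\equiv\pm1$ from the identity. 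I expect the delicate point to be controlling the incomplete character sums $\sum_{j=1}^m\chi(j)$: for each $t\not\equiv\pm1$ I would produce an even character nontrivial on both $t$ and $2$ with nonvanishing partial sum, treating the range $m<\ell/3$ (where $c$ is simply $-1$ on the odd residues $\le m$, on which multiplication acts rigidly) separately from the wrap-around range.
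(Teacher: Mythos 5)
Your overall architecture is sound, and several of your steps agree in substance with the paper's proof: the containment $\{\gamma_d\}\subseteq\mathrm{Stab}(s_{\ell,m})$ via rationality of the $q$-expansion, the forcing of upper-triangularity (the paper gets the lower-left entry $c\equiv 0\pmod\ell$ by observing that otherwise the Bernoulli sums run over a full period and $\ord_\infty(\gamma\cdot s_{\ell,m})=0\neq\ord_\infty s_{\ell,m}$, which is equivalent to your support argument), and the unipotent step, where your factor $e\left(-\tfrac{q\,m(m+1)(k-m)}{2\ell}\right)$ agrees with the paper's $e\left(\tfrac{b}{\ell}\cdot\tfrac{m(m+1)(2m+1-\ell)}{4}\right)$. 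However, there is a genuine gap at exactly the point you flag as the crux: you do not prove that invariance of $c_t=\mathbf{1}_{2B}(t)-\mathbf{1}_{B}(t)$ under multiplication by $p$ forces $p\equiv\pm1$. The character route you sketch needs, for \emph{every} $p\not\equiv\pm 1$, an even character $\chi$ modulo $\ell$ with $\chi(p)\neq 1$, $\chi(2)\neq 1$, and $\sum_{j=1}^m\chi(j)\neq 0$ all at once; the incomplete character sum $\sum_{j\leq m}\chi(j)$ can vanish for particular $\chi$ and $m$, and you offer no construction of such a $\chi$, only a plan to treat ranges of $m$ separately. As written, the central implication of the lemma is unproven.

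The paper sidesteps this entirely by comparing a single coordinate of the divisor, namely $\ord_\infty$. Once the lower-left entry vanishes, $\ord_\infty(\gamma\cdot s_{\ell,m})=\tfrac{\ell^2}{2}\sum_{j=1}^m\left(\B\left(\tfrac{2aj}{\ell}\right)-\B\left(\tfrac{aj}{\ell}\right)\right)$, and because $\tfrac{\ell^2}{2}\B\left(\tfrac{n}{\ell}\right)$ depends on $n$ only through $\tfrac{n^2}{2}$ modulo $\ell$ (the linear and constant terms contribute multiples of $\ell$ after summing the balanced numerator and denominator), this order is congruent mod $\ell$ to $\tfrac{m(m+1)(2m+1)}{4}\,a^2$ plus a multiple of $\ell$. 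Equating with $\ord_\infty s_{\ell,m}=\tfrac{m(m+1)}{4}(2m+1-\ell)$ and reducing mod $\ell$ gives $\tfrac{m(m+1)(2m+1)}{4}(a^2-1)\equiv 0\pmod\ell$, hence $a^2\equiv 1$, since $m$, $m+1$, and $2m+1$ are all prime to $\ell$ for $1\leq m\leq k-1$. This one-cusp computation replaces your appeal to the full divisor, to Proposition 6 of \cite{F1}, and to the nonvanishing of $B_{2,\chi}$. If you want to keep your framework, the cleanest repair is to pair the relation $c_{tp^{-1}}=c_t$ with the single vector $\left(\tfrac{\ell}{2}\B\left(\tfrac{t}{\ell}\right)\right)_t$ and reduce modulo $\ell$, which is precisely the paper's calculation in divisor language.
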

\begin{proof}
We argue in a fashion similar to Folsom's proof of Proposition 7.2 in \cite{F2}.
Suppose that $\gamma = \left(\begin{array}{cc} a & b \\ c & d \end{array}\right) \in \mathrm{GL}_2(\zz/\ell\zz) \simeq \text{Gal}(\mathcal{F}_\ell/\mathcal{F}_1)$ fixes $s_{\ell,m}(\tau)$. We first show that $c \equiv 0 \pmod \ell$. 
Note that from \eqref{defPsi} and \eqref{Sprod} we have
\begin{align*}
\ord_\infty s_{\ell,m}(\tau) &= \frac{\ell^2}{2}\sum_{j=1}^m \B(\tfrac{2j}{\ell}) - \B(\tfrac{j}{\ell}) = \frac{1}{2}\sum_{j=1}^m 3j^2-\ell j \\
&= \frac{3}{2}\cdot \frac{m(m+1)(2m+1)}{6} - \frac{\ell}{2}\cdot \frac{m(m+1)}{2} \\
&= \frac{m(m+1)}{4}(2m+1-\ell) \neq 0.
\end{align*}
By \eqref{seq} and \eqref{act}, we have
\begin{align*}
\gamma \cdot s_{\ell,m}(\tau) = \alpha \prod_{j=1}^m \prod_{s=0}^{\ell-1} \frac{g_{(2j/\ell, s/\ell)\gamma}(\tau)}{g_{(j/\ell,s/\ell)\gamma}(\tau)} = \alpha  \prod_{j=1}^m\prod_{s=0}^{\ell-1} \frac{g_{(\frac{2aj+cs}{\ell}, \frac{2bj+ds}{\ell})}(\tau)}{g_{(\frac{aj+cs}{\ell},\frac{bj+ds}{\ell})}(\tau)},
\end{align*}
where $\alpha$ is some root of unity. Therefore,
\begin{equation*}\label{ord}
\ord_\infty (\gamma\cdot s_{\ell,m}(\tau)) = \frac{\ell}{2}\sum_{j=1}^m\sum_{s=0}^{\ell-1} \B\left(\tfrac{2aj+cs}{\ell}\right) - \B\left(\tfrac{aj+cs}{\ell}\right).
\end{equation*}
If $c \not\equiv 0 \pmod \ell$ then as $s$ ranges from $0$ to $\ell-1$, the numbers  $2aj+cs$ and $aj+cs$ both run over all residues mod $\ell$, and we would find $\ord_\infty(\gamma\cdot s_{\ell,m}(\tau)) = 0$. 
Hence, we must have $c \equiv 0 \pmod \ell$. Then the above becomes
\begin{align*}
\ord_\infty (\gamma\cdot s_{\ell,m}(\tau)) &= \frac{\ell}{2}\sum_{j=1}^m\sum_{s=0}^{\ell-1} \B\left(\tfrac{2aj}{\ell}\right) - \B\left(\tfrac{aj}{\ell}\right) = \frac{1}{2}\sum_{j=1}^m 3a^2j^2-\ell a j \\
&= \frac{3a^2}{2}\cdot \frac{m(m+1)(2m+1)}{6} - \frac{\ell a}{2}\frac{m(m+1)}{2} \\
&=\frac{m(m+1)}{4}(a^2(2m+1)-\ell a).
\end{align*}
For $\gamma$ to fix $s_{\ell,m}(\tau)$ we must have $\ord_{\infty}(\gamma \cdot s_{\ell,m}(\tau)) = \ord_\infty s_{\ell,m}(\tau)$. This implies 
\[a^2(2m+1) - a\ell = 2m+1 -\ell,\]
 and reducing both sides mod $\ell$ we find $a^2 \equiv 1 \pmod \ell$. Hence, $a \equiv \pm 1 \pmod \ell$ and since $\gamma$ and $-\gamma$ are identified in $\text{Gal}(\mathcal{F}_\ell/\mathcal{F}_1)$ we may take $a \equiv 1 \pmod \ell$.
 
 We have now determined that $\gamma$ is of the form
 \[\gamma = \left(\begin{array}{cc} 1 & b \\ 0 & d \end{array}\right) = \left(\begin{array}{cc} 1 & 0 \\ 0 & d \end{array}\right)\left(\begin{array}{cc} 1 & b \\ 0 & 1 \end{array}\right).\]
 Note that from \eqref{Sprod} we have the following $q$-series expansion
 \begin{equation} \label{over-q}
 \frac{g_{(\frac{2j}{\ell},0)}(\ell\tau)}{g_{(\frac{j}{\ell},0)}(\ell\tau)} = q^{\frac{3j^2-\ell j}{2\ell}}\prod_{n=1}^\infty \frac{(1-q^{\ell n - \ell + 2j})(1 - q^{\ell n - j})}{(1-q^{\ell n - \ell + j})(1 - q^{\ell n - j})}.
 \end{equation}
In particular, the Fourier expansion of $s_{\ell,m}(\tau)$ at infinity is defined over $\qq$ and hence fixed by $\gamma_d$. From the $q$-series above we see
\[\left(\begin{array}{cc} 1 & b \\ 0 & 1 \end{array}\right)\cdot \frac{g_{(\frac{2j}{\ell},0)}(\ell\tau)}{g_{(\frac{j}{\ell},0)}(\ell\tau)}  = \frac{g_{(\frac{2j}{\ell},0)}(\ell(\tau+b))}{g_{(\frac{j}{\ell},0)}(\ell(\tau+b))} = e\left(\frac{b(3j^2-\ell j}{2\ell}\right) \frac{g_{(\frac{2j}{\ell},0)}(\ell\tau)}{g_{(\frac{j}{\ell},0)}(\ell\tau)},\]
and so
\[\gamma\cdot s_{\ell,m}(\tau) = \prod_{j=1}^m  e\left(\frac{b(3j^2-\ell j}{2\ell}\right) \frac{g_{(\frac{2j}{\ell},0)}(\ell\tau)}{g_{(\frac{j}{\ell},0)}(\ell\tau)} = e\left(\sum_{j=1}^m \frac{b(3j^2-\ell j)}{2\ell}\right) s_{\ell, m}(\tau).\]
For $\gamma$ to fix $s_{\ell,m}(\tau)$ we must have
\[\sum_{j=1}^m \frac{b(3j^2-\ell j)}{2\ell} = \frac{b}{\ell} \cdot \frac{m(m+1)}{4}(2m+1-\ell) \]
be an integer. Since $1 \leq m \leq k-1$, we must have $b \equiv 0 \pmod \ell$. Thus we have shown that if $\gamma \in \mathrm{GL}_2(\zz/\ell\zz)$ fixes $s_{\ell, m}(\tau)$ then $\gamma =\gamma_d$ for some $d \in (\zz/ \ell\zz)^\times$. Conversely, since $s_{\ell,m}(\tau)$ has Fourier expansion defined over $\qq$, given in \eqref{over-q}, it is fixed by all matrices of this form.
\end{proof}

\begin{proof}[Proof of Theorem \ref{cm}]
Proposition 7.2 of \cite{F2} shows that $\mathcal{F}_1(r_{\ell,m})$ is also the fixed field of $\{\gamma_d : d \in (\zz/\ell\zz)^\times\}$. Thus, applying Proposition 7.3 of \cite{F2} in the same way to $s_{\ell,m}(\tau)$ yields the result that $K_\ell = K(j(\tau), s_{\ell,m}(\tau))$.
\end{proof}


\begin{thebibliography}{10}

\bibitem{BCZ}
B.~C.~Berndt, H.~H.~Chan, and L.~C.~Zhang,
\newblock \textit{Explicit evaluations of the Rogers-Ramanujan continued fraction},
\newblock J.~reine Angew.~Math.~\textbf{480} (1996), 141-159.

\bibitem{BR}
B.~C.~Berndt and R.~A.~Rankin,
\newblock \textit{Ramanujan. Letters and commentary},
\newblock Amer. Math. Soc., Providence, RI, 1995.

\bibitem{D}
F.~Dyson,
\newblock \textit{A walk through Ramanujan's garden}.
\newblock Ramanujan revisited, AMS, Providence, 2001, 7--28.

\bibitem{F1} 
A.~Folsom,
\newblock \textit{Modular units and the $q$-difference equations of Selberg},
\newblock Math. Res. Lett. \textbf{17} (2010), no. 2, 283-299.

\bibitem{F2}
A.~Folsom,
\newblock \textit{Class invariants and cyclotomic unit groups from special values of modular units},
\newblock Journal de Th\'eorie des Nombres de Bordeaux.

\bibitem{GOW}
M.~Griffin, K.~Ono and S.~O.~Warnaar,
\newblock \textit{A framework of Rogers-Ramanujan identities and their arithmetic properties},
\newblock Duke Mathematical Journal, accepted for publication. \url{arXiv:1401.7718}

\bibitem{KL}
D.~Kubert and S.~Lang,
\newblock \textit{Modular Units},
\newblock Grundelhren der Math. Wissenchaften 244.
\newblock Springer-Verlag, New York--Berlin, 1981

\bibitem{RW}
E.~M.~Rains and S.~O.~Warnaar,
\newblock \textit{Bounded Littlewood identities},
\newblock \url{arXiv:1506.02755}

\bibitem{RR}
L.~J.~Rogers,
\newblock \textit{Second memoir on the expansion of certain infinite products},
\newblock Proc. London Math. Soc. \textbf{25} (1894), 318--343.

\bibitem{W}
L.~C.~Washington,
\newblock \textit{Introduction to Cyclotomic Fields}, 2nd Ed. Graduate Texts in Mathematics vol. 83, Springer Verlag (1997).

\end{thebibliography}
\end{document}